\newtheorem{theorem}{Theorem}
\newtheorem{corollary}[theorem]{Corollary}
\newtheorem{lemma}[theorem]{Lemma}
\newtheorem{proposition}[theorem]{Proposition}
\theoremstyle{definition}
\newtheorem{example}[theorem]{Example}
\DeclareMathOperator{\N}{N}
\DeclareMathOperator{\Tr}{Tr}
\title{On a Class of Permutation Polynomials and Their Inverses}
\author{Ruikai Chen\textsuperscript{1,2}\and Sihem Mesnager\textsuperscript{1,2,3}}
\date{\small\textsuperscript{1}Department of Mathematics, University of Paris VIII, F-93526 Saint-Denis\\\textsuperscript{2}Laboratory Analysis, Geometry and Applications, LAGA, University Sorbonne Paris Nord, CNRS, UMR 7539, F-93430, Villetaneuse, France\\\textsuperscript{3}Telecom Paris, Polytechnic institute of Paris, 91120 Palaiseau, France\\Emails: \href{mailto:chen.rk@outlook.com}{chen.rk@outlook.com}\quad\href{mailto:smesnager@univ-paris8.fr}{smesnager@univ-paris8.fr}}
\begin{document}

\maketitle

\noindent\textbf{Abstract.} We introduce a class of permutation polynomial over $\mathbb F_{q^n}$ that can be written in the form $\frac{L(x)}{x^{q+1}}$ or $\frac{L(x^{q+1})}x$ for some $q$-linear polynomial $L$ over $\mathbb F_{q^n}$. Specifically, we present those permutation polynomials explicitly as well as their inverses. In addition, more permutation polynomials can be derived in a more general form.\\
\textbf{Keywords.} permutation polynomial, compositional inverse, polynomial, finite field

\section{Introduction}

Let $q$ be a prime power and $\mathbb F_q$ denote the finite field of order $q$. A permutation polynomial over $\mathbb F_q$ is a polynomial that acts as a permutation on $\mathbb F_q$ in the usual way (see \cite[Chapter 7]{lidl1997}). Permutation polynomials play an essential role in the theory of finite fields as well as applications in coding theory, cryptography, combinatorics, etc.

If $f$ is a permutation polynomial of $\mathbb F_q$, then there exists a polynomial over $\mathbb F_q$, denoted $f^{-1}$, such that $f(f^{-1}(x))\equiv f^{-1}(f(x))\equiv x\pmod{x^q-x}$. We call it the (compositional) inverse of $f$, which is unique as a permutation. In general, given a map from a finite field to itself, the corresponding polynomial can be retrieved using the Lagrange interpolation formula. Particularly, one can refer to \cite{muratovic2007note} to compute the coefficients of the inverse of a permutation polynomial and to \cite{wang2009inverse} for a formula of the inverse of a permutation polynomial of the form $x^rh(x^s)$. However, it is still challenging to find a permutation polynomial and its inverse both of which have simple forms. In addition to monomials, Dickson polynomials and linearized polynomials, there are only a limited number of known examples, to name a few, including a class of bilinear permutation polynomials in \cite{coulter2002compositional}, permutation polynomials from some piecewise constructions in \cite{zheng2015piecewise}, and some permutation polynomials involving linearized polynomials in \cite{tuxanidy2014inverses,tuxanidy2017compositional}. See also a summary of inverses of permutation polynomials of small degree in \cite{zheng2019inverses}.

In recent years, much work has been done on permutation polynomials of the form $f(x)=xh(x^{q-1})$ over $\mathbb F_{q^3}$. Some examples of trinomials are listed as follows:
\begin{itemize}
\item $2x^\frac{q^3+q}2+x^q+x$ (\cite[Table 1]{cao2011new});
\item $x^\frac{q^3+q}2+x^\frac{q^2+1}2-x$ (\cite[Theorem 4.1]{ma2017some});
\item $ax^{q^3-q^2+q}+bx^{q^2-q+1}+x$ (\cite[Theorem 1]{xie2023two});
\item $x^{q^2+q-1}+ax^{q^2-q+1}+bx$ (\cite[Theorem 3.1]{bartoli2020permutation});
\item $x^{q^2+q-1}+ax^{q^2}+bx$ (\cite[Theorem 3.4]{bartoli2020permutation});
\item $x^{q^2-q+1}+ax^{q^2}+bx$ (\cite[Theorem 4.1]{gupta2022new}, \cite[Theorem 2]{xie2023two})
\end{itemize}
for odd $q$, and
\begin{itemize}
\item $x^{q^2-q+1}+ax^{q^2}+bx$ (\cite[Theorem 1]{pang2021permutation});
\item $ax^{q^2-q+1}+bx^{q^3-q^2+q}+x$ (\cite[Theorem 4]{pang2021permutation});
\item $x^{q^2+q-1}+ax^{q^2-q+1}+x$ (\cite[Theorem 3.2]{gupta2022new});
\item $x^{q^2+q-1}+ax^{q^3-q^2+q}+x$ (\cite[Theorem 3.4]{gupta2022new})
\end{itemize}
for even $q$, where $a$ and $b$ are coefficients in $\mathbb F_{q^3}$ satisfying some conditions therein. As we will see, some of the above polynomials are included in a more general class of permutation polynomials.

In this paper, we investigate a class of permutation polynomials that can be written in the form $\frac{L(x)}{x^{q+1}}$ or $\frac{L(x^{q+1})}x$ for some $q$-linear polynomial $L$ over $\mathbb F_{q^n}$ (i.e., polynomials that induce linear endomorphisms of $\mathbb F_{q^n}$ over $\mathbb F_q$). Here for convenience $\frac1x$ (or $x^{-1}$) does not denote a fraction but $x^{q^n-2}$ in the polynomial ring $\mathbb F_{q^n}[x]$. Also, let $x^{q^n}-x=0$ by abuse of notation since all those polynomials are treated as maps from $\mathbb F_{q^n}$ to itself. Let $\Tr$ and $\N$ be functions from $\mathbb F_{q^n}$ to $\mathbb F_q$ defined as
\[\Tr(x)=\sum_{i=0}^{n-1}x^{q^i}\quad\text{and}\quad\N(x)=\prod_{i=0}^{n-1}x^{q^i}.\]
In Section 2, we first establish some properties of those permutation polynomials and then present instances with inverses in explicit forms. A more general form of permutation polynomials will be discussed in Section 3.

\section{Two Kinds of Permutation Polynomials with Inverses}

Let $L$ be a $q$-linear polynomial over $\mathbb F_{q^n}$ as 
\[L(x)=\sum_{i=0}^na_ix^{q^i}.\]
Then we call the polynomial $L^\prime$ defined by
\[L^\prime(x)=\sum_{i=0}^{n-1}(a_ix)^{q^{n-i}}\]
the transpose of $L$, in the sense of linear endomorphisms of $\mathbb F_{q^n}/\mathbb F_q$. In fact, given a basis $\alpha_1,\dots,\alpha_n$ of $\mathbb F_{q^n}/\mathbb F_q$ and its dual basis $\beta_1,\dots,\beta_n$ (with respect to the usual bilinear form), the matrix with $(i,j)$ entry $\Tr(\beta_iL(\alpha_j))$ represents $L$ with respect to the basis $\alpha_1,\dots,\alpha_n$. Its transpose represents $L^\prime$ with respect to the dual basis, since $\Tr(\beta_jL(\alpha_i))=\Tr(\alpha_iL^\prime(\beta_j))$. This leads to the following result.

\begin{theorem}
If $\frac{L(x)}{x^{q+1}}$ is a permutation polynomial of $\mathbb F_{q^n}$, then so is $\frac{L^\prime(x^{q+1})}x$. The converse is true provided that $n$ is odd.
\end{theorem}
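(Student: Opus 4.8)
The plan is to translate the permutation property of $\frac{L(x)}{x^{q+1}}$ into a statement about $L$ acting on lines through the origin, and then to dualize using the transpose relation $\Tr(\beta L(\alpha)) = \Tr(\alpha L'(\beta))$ recorded just above the statement. The key observation I would exploit is that $\frac{L(x)}{x^{q+1}}$ is not a genuine rational function but the polynomial $L(x)\cdot x^{(q^n-2)(q+1)}$, so for $x \neq 0$ its value is $L(x)/x^{q+1}$ in the field, and at $x=0$ it evaluates to $0$. Thus I would first reduce the permutation condition to injectivity on $\mathbb F_{q^n}^\ast$.

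First I would set up the geometric picture. For $x \in \mathbb F_{q^n}^\ast$, write $\frac{L(x)}{x^{q+1}} = \frac{L(x)}{x}\cdot\frac1{x^q}$; since $x \mapsto x^{q+1} = x\cdot x^q = \N$-like norm behavior governs the denominator, I would examine how the map behaves along the multiplicative cosets of $\mathbb F_q^\ast$. The natural move is to note that $\frac{L(x)}{x^{q+1}}$ is invariant (or transforms predictably) under scaling $x \mapsto cx$ for $c \in \mathbb F_q^\ast$, because $L$ is $q$-linear and $x^{q+1}$ scales by $c^{q+1}=c^2$. So the map essentially descends to the projective line $\mathrm{PG}(1,q)$-style quotient $\mathbb F_{q^n}^\ast/\mathbb F_q^\ast$, and being a permutation amounts to injectivity of this induced map together with a counting/balance condition on fibers. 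Then I would carry out the parallel analysis for $\frac{L'(x^{q+1})}{x}$, again reducing to a statement on the same quotient but now phrased through $L'$ and the substitution $y = x^{q+1}$.

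Next I would connect the two via the transpose identity. The cleanest route I anticipate is to characterize ``$\frac{L(x)}{x^{q+1}}$ is a permutation'' by a bilinear nondegeneracy condition of the form: for all $a,b$, the equation $L(x) = a x^{q+1}$ (for fixed $a$, as $x$ ranges over a coset) has a controlled number of solutions, and to rewrite this using $\Tr(\beta L(\alpha)) = \Tr(\alpha L'(\beta))$ so that solvability for $L$ with weight $x^{q+1}$ becomes solvability for $L'$ with the dual weight $x^{q+1}$ but $x$ replaced appropriately. Concretely, I expect the condition for $\frac{L(x)}{x^{q+1}}$ to be a permutation to be self-dual under $L \mapsto L'$ combined with swapping the roles of numerator linearity and denominator, which is exactly what turns $\frac{L(x)}{x^{q+1}}$ into $\frac{L'(x^{q+1})}{x}$.

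The main obstacle will be the asymmetry that forces the ``$n$ odd'' hypothesis for the converse. I expect the forward direction to follow from a clean dualization that works for all $n$, but the converse to require inverting the map $x \mapsto x^{q+1}$ on the quotient by $\mathbb F_q^\ast$, whose fibers have size $\gcd(q+1, (q^n-1)/(q-1))$; this gcd is $1$ precisely when $n$ is odd (since $q+1 \mid q^n-1$ iff $n$ is even, making the squaring-type map $2$-to-$1$ on the relevant quotient and breaking the bijection otherwise). So the crux is a careful count showing that for odd $n$ the substitution $y=x^{q+1}$ is a bijection on $\mathbb F_{q^n}^\ast/\mathbb F_q^\ast$, letting the dual argument run in reverse, whereas for even $n$ it degenerates. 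I would isolate this gcd computation as the decisive step and verify it with a small example (e.g. $n=2$) to confirm the converse genuinely fails there.
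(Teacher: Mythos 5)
Your plan for the forward direction has a genuine hole at its decisive step. You propose to ``rewrite solvability for $L$ with weight $x^{q+1}$'' via the relation $\Tr(\beta L(\alpha))=\Tr(\alpha L^\prime(\beta))$ and then invoke an expected ``self-duality,'' but that relation applies only to $\mathbb F_q$-linear maps, and none of the objects you offer to dualize is linear in $x$: the equation $L(x)=ax^{q+1}$ for fixed $a$ is not linear, and neither is the induced map on $\mathbb F_{q^n}^*/\mathbb F_q^*$. The missing idea, which is the heart of the paper's proof, is a linearization by differences in the parameter $u$: injectivity of $\frac{L(x)}{x^{q+1}}$ on $\mathbb F_{q^n}^*$ is equivalent to the condition that for every fixed $u\in\mathbb F_{q^n}^*$ with $u\ne1$ the $q$-linear polynomial $u^{q+1}L(x)-L(ux)$ has no root in $\mathbb F_{q^n}^*$ --- the nonlinear weight $x^{q+1}$ collapses to the scalar $u^{q+1}$. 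The transpose of this linear map is $L^\prime(u^{q+1}x)-uL^\prime(x)$, transposition preserves rank and hence nonsingularity, and non-vanishing of the transposed family at nonzero $(q+1)$-st powers is exactly injectivity of $\frac{L^\prime(x^{q+1})}{x}$. Without exhibiting this family (or some substitute, e.g.\ a character-sum identity in the spirit of Section 3), your ``I expect the condition to be self-dual'' is a restatement of the theorem rather than a proof.

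Your treatment of the converse, by contrast, is essentially correct and in one respect cleaner than the paper's. The decisive fact you isolate --- $\gcd(q+1,(q^n-1)/(q-1))=1$ for odd $n$, since $\sum_{i=0}^{n-1}q^i\equiv\sum_{i=0}^{n-1}(-1)^i\equiv1\pmod{q+1}$ --- means every element of $\mathbb F_{q^n}^*$ equals $cw^{q+1}$ with $c\in\mathbb F_q^*$, and since the linear maps above are $\mathbb F_q$-homogeneous, a root anywhere can be scaled to a root at a genuine $(q+1)$-st power; this merges into one argument what the paper splits into two cases (computing $\gcd(q+1,q^n-1)=\gcd(q+1,2)$ and, for odd $q$, multiplying a putative root by an element $\alpha\in\mathbb F_q$ that is a non-square in $\mathbb F_{q^n}$). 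Two minor corrections: for even $n$ the power map on the quotient is $(q+1)$-to-one, not two-to-one; and the theorem makes no claim that the converse fails for even $n$, so your proposed $n=2$ counterexample check, while harmless, is not needed. None of this, however, repairs the forward direction: you must first produce the linear family to which duality actually applies, and only then does your (correct) quotient analysis let the argument run in reverse for odd $n$.
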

\begin{proof}
Suppose $\frac{L(x)}{x^{q+1}}$ permutes $\mathbb F_{q^n}$. Then given any $u\in\mathbb F_{q^n}^*$ with $u\ne1$,
\[\frac{L(x)}{x^{q+1}}-\frac{L(ux)}{(ux)^{q+1}}\]
has no root in $\mathbb F_{q^n}^*$, and neither does
\[u^{q+1}L(x)-L(ux),\]
whose transpose is
\[L^\prime(u^{q+1}x)-uL^\prime(x).\]
As the transpose preserves the rank of a linear map, the polynomial
\[\frac{L^\prime((ux)^{q+1})}{ux}-\frac{L^\prime(x^{q+1})}x\]
has no root in $\mathbb F_{q^n}^*$, which means $\frac{L^\prime(x^{q+1})}x$ is injective on $\mathbb F_{q^n}$, and hence a permutation.

Suppose $n$ is odd. For the converse, it suffices to prove that if $L^\prime((ux)^{q+1})-uL^\prime(x^{q+1})$ has no root in $\mathbb F_{q^n}^*$, then neither does $L^\prime(u^{q+1}x)-uL^\prime(x)$. Note that
\[q^n-1=(q+1-1)^n-1\equiv-2\pmod{q+1},\]
so $\gcd(q+1,q^n-1)=\gcd(q+1,2)$. If $q$ is even, then $\gcd(q+1,q^n-1)=1$ and the claim is obvious. If $q$ is odd, then $\gcd(q+1,q^n-1)=2$, and there exists an element $\alpha\in\mathbb F_q$ that is a non-square in $\mathbb F_{q^n}$. Assume $L^\prime(u^{q+1}x_0)-uL^\prime(x_0)=0$ for some $x_0\in\mathbb F_{q^n}^*$. Then the same holds for $\alpha x_0$, as can be easily seen. Therefore, one of $x_0$ and $\alpha x_0$ is a square in $\mathbb F_{q^n}^*$ and gives rise to a root of $L^\prime((ux)^{q+1})-uL^\prime(x^{q+1})$ in $\mathbb F_{q^n}^*$. This shows that $\frac{L(x)}{x^{q+1}}$ is indeed injective.
\end{proof}

\begin{theorem}
If $\frac{L(x)}{x^{q+1}}$ is a permutation polynomial of $\mathbb F_{q^n}$, then so are $L$ and $\frac{L^{-1}(x^{q+1})}x$. The converse is true provided that $n$ is odd.
\end{theorem}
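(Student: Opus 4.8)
The plan is to run the same injectivity-based strategy as in Theorem 1, but with the compositional inverse $L^{-1}$ in place of the transpose $L'$, trading the rank argument for a change of variable. First I would observe that $\frac{L(x)}{x^{q+1}}$ being a permutation forces $L$ to be one. Indeed, the map $\frac{L(x)}{x^{q+1}}$ has no constant term and hence fixes $0$, while for $x\ne0$ it vanishes exactly when $L(x)=0$. If $L$ had a nontrivial kernel, some $x_0\in\mathbb F_{q^n}^*$ would satisfy $L(x_0)=0$, so both $0$ and $x_0$ would be sent to $0$, contradicting injectivity. Thus $\ker L=\{0\}$, $L$ permutes $\mathbb F_{q^n}$, and its compositional inverse $M:=L^{-1}$ exists and is again $q$-linear.

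Next I would reduce both permutation properties to a single family of equations. Exactly as in Theorem 1, comparing the values at $x$ and $ux$ shows that $\frac{L(x)}{x^{q+1}}$ is injective on $\mathbb F_{q^n}^*$ if and only if, for every $u\in\mathbb F_{q^n}^*$ with $u\ne1$, the equation $u^{q+1}L(x)=L(ux)$ has no root in $\mathbb F_{q^n}^*$. Substituting $y=L(x)$, which ranges over all of $\mathbb F_{q^n}^*$ since $L$ is a permutation, and applying $M$ rewrites this as: $M(u^{q+1}y)=uM(y)$ has no root $y\in\mathbb F_{q^n}^*$. The same computation applied to $\frac{M(x^{q+1})}x$ shows it is injective on $\mathbb F_{q^n}^*$ if and only if $M(u^{q+1}y)=uM(y)$ has no root $y$ lying in the set $S=\{x^{q+1}:x\in\mathbb F_{q^n}^*\}$ of $(q+1)$-th powers. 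Since $S\subseteq\mathbb F_{q^n}^*$, the forward implication is immediate: no root in the larger set $\mathbb F_{q^n}^*$ forces no root in $S$, so $\frac{L^{-1}(x^{q+1})}x$ is a permutation.

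For the converse I would have to upgrade ``no root in $S$'' to ``no root in $\mathbb F_{q^n}^*$'', and this is the only place where oddness of $n$ enters. By the computation $\gcd(q+1,q^n-1)=\gcd(q+1,2)$ from Theorem 1, the index of $S$ in $\mathbb F_{q^n}^*$ is $\gcd(q+1,2)$. When $q$ is even, $S=\mathbb F_{q^n}^*$ and the two conditions coincide. When $q$ is odd, $S$ is exactly the set of nonzero squares; if a non-square $y_0$ were a root of $M(u^{q+1}y)=uM(y)$, then choosing $\alpha\in\mathbb F_q$ that is a non-square in $\mathbb F_{q^n}$ (which exists precisely because $n$ is odd, as in Theorem 1) and using $\mathbb F_q$-linearity $M(\alpha z)=\alpha M(z)$, the element $\alpha y_0\in S$ would be a root as well. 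This contradicts $\frac{L^{-1}(x^{q+1})}x$ being a permutation, so no root exists in $\mathbb F_{q^n}^*$, whence $\frac{L(x)}{x^{q+1}}$ is injective and therefore a permutation.

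The genuinely delicate step is this last lifting: moving between roots in the multiplicative subgroup $S$ and roots in the full group $\mathbb F_{q^n}^*$. Everything else, namely the kernel argument, the reduction to $M(u^{q+1}y)=uM(y)$, and the forward inclusion $S\subseteq\mathbb F_{q^n}^*$, is routine once Theorem 1's framework is in place. The hypothesis that $n$ is odd is used solely to supply an $\mathbb F_q$-rational scalar that remains a non-square in $\mathbb F_{q^n}$, which is exactly what allows the $\mathbb F_q$-linearity of $M$ to push a hypothetical bad root into $S$.
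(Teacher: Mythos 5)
Your proof is correct, and it takes a genuinely different route from the paper's, so it is worth comparing the two. You transplant the injectivity criterion of Theorem 1 (comparing values at $x$ and $ux$) and, via the change of variable $y=L(x)$, collapse both permutation properties onto one family of equations $L^{-1}(u^{q+1}y)=uL^{-1}(y)$, the two statements differing only in where roots are forbidden: all of $\mathbb F_{q^n}^*$ for $\frac{L(x)}{x^{q+1}}$, versus the subgroup $S$ of $(q+1)$-st powers for $\frac{L^{-1}(x^{q+1})}{x}$. The forward implication then reduces to the inclusion $S\subseteq\mathbb F_{q^n}^*$, and the converse to pushing a hypothetical non-square root into $S$ by an $\mathbb F_q$-rational non-square $\alpha$. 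The paper argues instead by exact fiber counting: it shows $L(x)+t^{q+1}x^{q+1}$ has exactly one root in $\mathbb F_{q^n}^*$ for each $t\in\mathbb F_{q^n}^*$, applies $L^{-1}$ and substitutes $t^{-1}x$ to transfer this statement to $t^{-1}x+L^{-1}(x^{q+1})$; in the forward direction the values $-t^{-1}$ then sweep out all of $\mathbb F_{q^n}^*$ with no gcd issue arising, while the converse covers $\mathbb F_{q^n}^*$ by the values $t^{q+1}$ and $\alpha t^{q+1}$, using the same non-square $\alpha\in\mathbb F_q$ and the same computation $\gcd(q+1,q^n-1)=\gcd(q+1,2)$ for odd $n$. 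The arithmetic core is thus identical in both proofs (the gcd computation, the existence of $\alpha\in\mathbb F_q$ that is a non-square in $\mathbb F_{q^n}$ when $n$ and $q$ are odd, and $\mathbb F_q$-linearity to propagate roots); what your formulation buys is uniformity, making Theorems 1 and 2 visibly two instances of one scheme (transpose versus compositional inverse acting on the same pivot equation), whereas the paper's counting formulation buys slightly more information per step (each fiber is shown to contain exactly one point) and is the template reused later, e.g., in its Proposition 3. One small bookkeeping point to keep explicit in your write-up: to pass from injectivity on $\mathbb F_{q^n}^*$ to a permutation of $\mathbb F_{q^n}$ you also need that the maps send $\mathbb F_{q^n}^*$ into $\mathbb F_{q^n}^*$ and $0$ to $0$; this follows from the trivial kernels of $L$ and $L^{-1}$, which your opening kernel argument supplies, but it should be invoked at the end of both directions.
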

\begin{proof}
Suppose $\frac{L(x)}{x^{q+1}}$ is a permutation polynomial of $\mathbb F_{q^n}$. Clearly it maps zero to zero (and $\mathbb F_{q^n}^*$ to $\mathbb F_{q^n}^*$), so $L(x)$ has no nonzero root in $\mathbb F_{q^n}$, and is invertible on $\mathbb F_{q^n}$. For arbitrary $t\in\mathbb F_{q^n}^*$, the polynomial $\frac{L(x)}{x^{q+1}}+t^{q+1}$, as well as $L(x)+t^{q+1}x^{q+1}$, has exactly one root in $\mathbb F_{q^n}^*$. Apply $L^{-1}$ to $L(x)+t^{q+1}x^{q+1}$ and then substitute $t^{-1}x$ for $x$ to get
\[t^{-1}x+L^{-1}(x^{q+1}),\]
which also has exactly one root in $\mathbb F_{q^n}^*$. This proves the first part.

Suppose $n$ is odd, and $L$ is a permutation polynomial of $\mathbb F_{q^n}$ as well as $\frac{L^{-1}(x^{q+1})}x$. By the same argument,
\[L(x)+t^{q+1}x^{q+1}\]
has exactly one root in $\mathbb F_{q^n}^*$ for each $t\in\mathbb F_{q^n}^*$. If $q$ is even, then it is apparent as $\gcd(q+1,q^n-1)=1$. Suppose $q$ is odd. Then $\gcd(q+1,q^n-2)=2$ and there exists an element $\alpha\in\mathbb F_q$ that is a non-square in $\mathbb F_{q^n}$. Thus,
\[L(x)+\alpha t^{q+1}x^{q+1}=\alpha(L(\alpha^{-1}x)+t^{q+1}x^{q+1})\]
and
\[L(x)+(t\alpha)^{q+1}x^{q+1}\]
has the same number of roots in $\mathbb F_{q^n}^*$. The proof is complete since any element of $\mathbb F_{q^n}^*$ is either $t^{q+1}$ or $\alpha t^{q+1}$ for some $t\in\mathbb F_{q^n}^*$.
\end{proof}

As a direct result, if $L(x)=x^q+ax$ for some $a\in\mathbb F_{q^n}^*$ with $\N(-a)\ne1$, then clearly $\frac{L(x)}{x^{q+1}}=x^{-1}+ax^{-q}$ is a permutation polynomial of $\mathbb F_{q^n}$, as well as $\frac{L^{-1}(x^{q+1})}x$ by the above theorem. Furthermore, if $n$ is odd and $L(x)=x^{q^{n-1}}+ax$, then $\frac{L^{-1}(x)}{x^{q+1}}$ is a permutation polynomial of $\mathbb F_{q^n}$, by a similar argument. In fact, the latter holds for arbitrary $n$. The results are summarized as shown below.

\begin{proposition}
Let $a$ be an element in $\mathbb F_{q^n}^*$ with $\N(-a)\ne1$. If $L(x)=x^q+ax$, then $\frac{L^{-1}(x^{q+1})}x$ is a permutation polynomial of $\mathbb F_{q^n}$. If $L(x)=x^{q^{n-1}}+ax$, then $\frac{L^{-1}(x)}{x^{q+1}}$ is a permutation polynomial of $\mathbb F_{q^n}$.
\end{proposition}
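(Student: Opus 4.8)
The first assertion needs no new work: writing $y=x^{-1}$ turns $\frac{x^{q}+ax}{x^{q+1}}$ into the $\mathbb F_{q}$-linear map $y\mapsto ay^{q}+y$, which is bijective iff $ay^{q}+y=0$ forces $y=0$, i.e.\ iff $y^{q-1}=-a^{-1}$ is unsolvable in $\mathbb F_{q^{n}}^{*}$; as every value $y^{q-1}$ has norm $1$ while $\N(-a^{-1})=\N(-a)^{-1}\neq1$, this is clear, and Theorem~2 then gives that $\frac{L^{-1}(x^{q+1})}{x}$ permutes $\mathbb F_{q^{n}}$. So I would devote the proof to the second assertion, and specifically to obtaining it for \emph{every} $n$: the ``similar argument'' through the converses of Theorems~1 and~2 only runs when $n$ is odd, because for even $n$ every element of $\mathbb F_{q}$ is a square in $\mathbb F_{q^{n}}$, which is exactly the fact those converses rely on.

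My plan is to make $L^{-1}$ explicit and then reduce to a kernel computation. Raising $L(x)=x^{q^{n-1}}+ax$ to the $q$-th power and using $x^{q^{n}}=x$ gives $L(x)^{q}=a^{q}x^{q}+x=:N(x)$, so $L(x)=N(x)^{q^{n-1}}$; since $N$ has trivial kernel by the same norm argument as above, $N$ and hence $L$ are invertible, and $L^{-1}(x)=N^{-1}(x^{q})$. It therefore remains to show $F(x):=\frac{N^{-1}(x^{q})}{x^{q+1}}$ permutes $\mathbb F_{q^{n}}$. As $F(0)=0$ and $F(x)\neq0$ for $x\neq0$, I would fix $c\in\mathbb F_{q^{n}}^{*}$ and count the solutions of $F(x)=c$: clearing the denominator, applying $N$, and dividing by $x^{q}$ converts $F(x)=c$ into the affine equation
\[a^{q}c^{q}x^{q^{2}}+cx=1,\]
every step being reversible for $x\neq0$. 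Thus the preimages of $c$ are exactly the roots of $\Lambda_{c}(X):=a^{q}c^{q}X^{q^{2}}+cX$ at the value $1$.

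The heart of the matter is to prove that the $\mathbb F_{q}$-linear map $\Lambda_{c}$ is injective for each $c\in\mathbb F_{q^{n}}^{*}$; then $\Lambda_{c}(X)=1$ has a unique solution, so $F$ is a bijection of $\mathbb F_{q^{n}}^{*}$ and the proof is done. A nonzero $X\in\ker\Lambda_{c}$ would satisfy $X^{q^{2}-1}=-a^{-q}c^{1-q}$, and I would rule this out by a norm mismatch: the left side has norm $1$, since $\N(X^{q^{2}-1})=\N(X)^{q^{2}-1}=1$ because $\N(X)\in\mathbb F_{q}^{*}$ and $(q-1)\mid(q^{2}-1)$, whereas $\N\bigl(-a^{-q}c^{1-q}\bigr)=\N(-1)\N(a)^{-1}=\N(-a)^{-1}\neq1$ by hypothesis (using $\N(c^{1-q})=1$, $\N(a^{q})=\N(a)$ and $\N(-1)^{2}=1$). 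This contradiction yields $\ker\Lambda_{c}=0$. I expect this norm computation to be the only delicate point; it is what makes the statement hold \emph{uniformly} in $n$, sidestepping the parity restriction inherent in the converse directions of Theorems~1 and~2, while the rest of the argument is bookkeeping.
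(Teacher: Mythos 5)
Your proof is correct and is essentially the paper's own argument: the paper likewise settles the first claim by appeal to the preceding theorem, and for $L(x)=x^{q^{n-1}}+ax$ it reduces $L^{-1}(x)+tx^{q+1}=0$ via the identity $x+L(tx^{q+1})=x\big(1+tx+a^qt^qx^{q^2}\big)^{q^{n-1}}$ to the unique solvability of $1+tx+a^qt^qx^{q^2}=0$, ruling out a nonzero kernel element of $tx+a^qt^qx^{q^2}$ by exactly the norm computation you apply to $\Lambda_c$ (with $t=-c$). Your explicit factorization $L^{-1}(x)=N^{-1}(x^q)$ with $N(x)=L(x)^q$ is just a repackaging of the paper's $q^{n-1}$-th-power identity, so the two arguments coincide step for step, and both are uniform in $n$.
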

\begin{proof}
The first statement has been proved. Let $L(x)=x^{q^{n-1}}+ax$. For $t\in\mathbb F_{q^n}^*$, the polynomial $1+tx+a^qt^qx^{q^2}$ has exactly one root in $\mathbb F_{q^n}^*$, for otherwise $tx_0+a^qt^qx_0^{q^2}=0$ for some $x_0\in\mathbb F_{q^n}^*$, which means $-a^q=t^{1-q}x_0^{1-q^2}$ and $\N(-a)=1$. Since
\[x+L(tx^{q+1})=x+t^{q^{n-1}}x^{q^{n-1}+1}+atx^{q+1}=x\big(1+tx+a^qt^qx^{q^2}\big)^{q^{n-1}},\]
the polynomial $L^{-1}(x)+tx^{q+1}$, as well as $x+L(tx^{q+1})$, also has exactly one root in $\mathbb F_{q^n}^*$.
\end{proof}

The inverse of a $q$-linear permutation binomial can be obtained from \cite[Theorem 2.1]{wu2013compositional} as follows.

\begin{lemma}\label{inverse}
For $a\in\mathbb F_{q^n}^*$ with $\N(-a)\ne1$ and a positive integer $k$ with $\gcd(k,n)=1$, the inverse of $x^{q^k}+ax$ is
\[\frac{\N(a)}{\N(a)+(-1)^{n-1}}\sum_{i=0}^{n-1}(-1)^ia^{-\frac{q^{k(i+1)}-1}{q^k-1}}x^{q^{ki}}.\]
\end{lemma}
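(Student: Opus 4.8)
The plan is to verify directly that the displayed polynomial is the compositional inverse of $L(x)=x^{q^k}+ax$, by computing the composition $L\circ G$ and showing it equals the identity map on $\mathbb F_{q^n}$; since $\N(-a)\ne1$ forces $x^{q^k}+ax$ to have only the trivial root and hence to be a permutation, a one-sided inverse suffices. I would regard $L$ and the candidate $G$ as $\mathbb F_q$-linear operators and record everything in terms of the Frobenius-type map $\sigma\colon x\mapsto x^{q^k}$, writing $M_b$ for multiplication by $b\in\mathbb F_{q^n}$. Setting $e_i=\frac{q^{k(i+1)}-1}{q^k-1}=1+q^k+\dots+q^{ki}$ and $c=\frac{\N(a)}{\N(a)+(-1)^{n-1}}$, the claim becomes $G=c\sum_{i=0}^{n-1}(-1)^iM_{a^{-e_i}}\sigma^i$, and $L=\sigma+M_a$. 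The one computational rule I need is the commutation $\sigma\circ M_b=M_{b^{q^k}}\circ\sigma$, which is just $\sigma(by)=b^{q^k}\sigma(y)$.

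First I would expand $L\circ G$ into two sums, using the commutation rule on the $\sigma$ part: the term $\sigma\circ M_{a^{-e_i}}\sigma^i$ becomes $M_{a^{-e_iq^k}}\sigma^{i+1}$, while $M_a\circ M_{a^{-e_i}}\sigma^i$ is $M_{a^{1-e_i}}\sigma^i$. The key algebraic identity is $e_iq^k=e_{i+1}-1$, so $a^{-e_iq^k}=a^{1-e_{i+1}}$; this makes the first sum, after reindexing $j=i+1$, run over the same scalars $a^{1-e_j}$ as the second sum but with opposite sign. Consequently the two sums telescope: every interior power $\sigma^j$ with $1\le j\le n-1$ cancels because $(-1)^{j-1}+(-1)^j=0$, and only two boundary terms survive, namely the $i=0$ term $M_{a^{1-e_0}}=M_1=\mathrm{id}$ (since $e_0=1$) and the $j=n$ term $(-1)^{n-1}M_{a^{1-e_n}}\sigma^n$.

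The remaining and most delicate step is evaluating that $\sigma^n$ boundary term. Here I would use that $\sigma^n=\mathrm{id}$ on $\mathbb F_{q^n}$ (as $q^{kn}\equiv1\pmod{q^n-1}$) and, crucially, the hypothesis $\gcd(k,n)=1$: it guarantees that $\{ki\bmod n:0\le i\le n-1\}$ is all of $\{0,\dots,n-1\}$, whence $a^{\sum_{i=0}^{n-1}q^{ki}}=\prod_{i=0}^{n-1}a^{q^i}=\N(a)$. Since $e_n=\sum_{i=0}^{n}q^{ki}=\big(\sum_{i=0}^{n-1}q^{ki}\big)+q^{kn}$ and $a^{q^{kn}}=a$, this gives $a^{e_n}=a\,\N(a)$ and therefore $a^{1-e_n}=\N(a)^{-1}$. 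Collecting the two surviving terms yields $L\circ G=c\big(1+(-1)^{n-1}\N(a)^{-1}\big)\mathrm{id}$, so solving $c\big(1+(-1)^{n-1}\N(a)^{-1}\big)=1$ returns exactly $c=\frac{\N(a)}{\N(a)+(-1)^{n-1}}$, the denominator being nonzero precisely because $\N(-a)=(-1)^n\N(a)\ne1$. I expect the main obstacle to be purely bookkeeping—keeping the skew-commutation, the index shift, and the reduction of exponents modulo $q^n-1$ consistent—rather than anything conceptually hard; as the excerpt notes, an alternative is simply to specialize \cite[Theorem 2.1]{wu2013compositional} to the binomial case and identify the resulting scalar with $c$.
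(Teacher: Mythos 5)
Your proof is correct, and it differs from the paper in an essential way: the paper gives no proof of Lemma \ref{inverse} at all, but imports it by specializing \cite[Theorem 2.1]{wu2013compositional} to the binomial case --- precisely the alternative you name in your last sentence. Your direct verification is sound at every step: with $L=\sigma+M_a$ and $G=c\sum_{i=0}^{n-1}(-1)^iM_{a^{-e_i}}\sigma^i$, the identity $e_iq^k=e_{i+1}-1$ really does telescope $L\circ G$ down to the two boundary terms $M_{a^{1-e_0}}=\mathrm{id}$ and $(-1)^{n-1}M_{a^{1-e_n}}\sigma^n$; the hypothesis $\gcd(k,n)=1$ enters exactly where you put it, making $\{ki\bmod n: 0\le i\le n-1\}$ a complete residue system so that $a^{1-e_n}=\N(a)^{-1}$; and $c\big(1+(-1)^{n-1}\N(a)^{-1}\big)=1$ recovers the stated constant, whose denominator vanishes precisely when $\N(-a)=(-1)^n\N(a)=1$. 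Your remark that a one-sided inverse suffices is also right, and in fact you need even less than you claim: on a finite set, $L\circ G=\mathrm{id}$ already forces both maps to be bijections, so the preliminary observation about the trivial kernel of $x^{q^k}+ax$ is dispensable. The one detail worth writing out explicitly is that the prefactor $c$ commutes with $\sigma$ because $\N(a)\in\mathbb F_q$, hence $c\in\mathbb F_q$ and $c^{q^k}=c$; without this you could not pull $c$ through $L$ in the operator computation. The trade-off between the two routes is the usual one: the citation keeps the paper short, while your telescoping computation makes the lemma self-contained, verifies the exact normalizing constant and the role of each hypothesis rather than deferring to an unpublished preprint, and would extend mechanically to similar linearized binomials.
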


\begin{example}\label{e0}
In particular, in the case $n=3$, the inverse of $x^q+ax$ with $\N(a)\ne-1$ is
\[\frac{x^{q^2}-a^{q^2}x^q+a^{q^2+q}x}{\N(a)+1}.\]
Thus
\[\frac{x^{q^2+1}-a^{q^2}x^{q^2+q}+a^{q^2+q}x^{q+1}}x=x^{q^2}-a^{q^2}x^{q^2+q-1}+a^{q^2+q}x^q\]
is a permutation polynomial of $\mathbb F_{q^3}$, and so is its $q$-th power
\[x-ax^{q^2-q+1}+a^{q^2+1}x^{q^2}.\]
Substituting $x^\frac{q^3+q}2$ for $x$ here, we get
\[x^\frac{q^3+q}2-ax+a^{q^2+1}x^\frac{q^2+1}2,\]
also a permutation polynomial of $\mathbb F_{q^3}$ in the case of odd $q$, since $\gcd(\frac{q^2+1}2,q^3-1)=1$ with $\frac{q^2+1}2(q^2-q+1)q\equiv1\pmod{q^3-1}$. Note that those permutation trinomials from \cite[Theorem 4.1]{gupta2022new}, \cite[Theorem 2]{xie2023two} and \cite[Theorem 4.1]{ma2017some}, as special cases, can be obtained from the above discussion along with Example \ref{e1} below.
\end{example}

Now, we further the study by determining the inverses of some of those permutation polynomials.

\begin{theorem}
Let $n$ be odd, $r=\sum_{i=0}^\frac{n-1}2q^{2i}$ and $s=\sum_{i=1}^\frac{n-1}2q^{2i-1}$. If $L$ is the inverse of $x^{q^{n-1}}+ax$ for some $a\in\mathbb F_{q^n}^*$ with $\N(-a)\ne1$, then the inverse of $\frac{L(x)}{x^{q+1}}$ as a permutation polynomial of $\mathbb F_{q^n}$ is $\frac{\ell(x^s)}{x^r}$, where $\ell$ is the inverse of $x^{q^{n-1}}+ax^q$.
\end{theorem}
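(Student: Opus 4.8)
The plan is to verify directly that $G(x):=\frac{\ell(x^s)}{x^r}$ inverts $F(x):=\frac{L(x)}{x^{q+1}}$ by showing that the equation defining a preimage under $F$ and the equation defining a preimage under $G$ are \emph{literally the same}. Since $F$ is already known to be a permutation of $\mathbb F_{q^n}$ (writing $M(x)=x^{q^{n-1}}+ax$, we have $L=M^{-1}$, so $F$ is exactly the polynomial $\frac{M^{-1}(x)}{x^{q+1}}$ shown to permute $\mathbb F_{q^n}$ in the preceding Proposition), it suffices to establish the two biconditionals
\begin{align*}
F(x)=y &\iff 1=(xy)^{q^{n-1}}+ax^qy \qquad (x\in\mathbb F_{q^n}^*),\\
G(y)=x &\iff 1=(xy)^{q^{n-1}}+ax^qy \qquad (y\in\mathbb F_{q^n}^*).
\end{align*}
Both $F$ and $G$ fix $0$ and map $\mathbb F_{q^n}^*$ into itself (for $G$, because $\ell$ is a bijection fixing $0$, so $\ell(y^s)\ne0$ whenever $y\ne0$), so combining the two equivalences yields $F(x)=y\iff G(y)=x$, which is precisely $G=F^{-1}$.

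For the first equivalence I would unwind $F$ using $L=(x^{q^{n-1}}+ax)^{-1}$, so that $L^{-1}(w)=w^{q^{n-1}}+aw$. For $x\ne0$, $F(x)=y$ is equivalent to $L(x)=yx^{q+1}$, hence to $x=L^{-1}(yx^{q+1})=(yx^{q+1})^{q^{n-1}}+a\,yx^{q+1}$. Using $q^n\equiv1$ in the exponents gives $(x^{q+1})^{q^{n-1}}=x^{1+q^{n-1}}$, so after dividing by $x$ this collapses to $1=(xy)^{q^{n-1}}+ax^qy$, as claimed.

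For the second equivalence I would unwind $G$ using $\ell=(x^{q^{n-1}}+ax^q)^{-1}$, so that $\ell^{-1}(w)=w^{q^{n-1}}+aw^q$. For $y\ne0$, $G(y)=x$ is equivalent to $\ell(y^s)=xy^r$, hence to $y^s=(xy^r)^{q^{n-1}}+a(xy^r)^q$. The essential input now is the exponent arithmetic modulo $q^n-1$: with $n$ odd one checks $qr\equiv s+1$ and $q^{n-1}r\equiv q^{n-1}+s$, which give $(xy^r)^q=x^qy^{s+1}$ and $(xy^r)^{q^{n-1}}=x^{q^{n-1}}y^{q^{n-1}+s}$. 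Substituting and dividing by $y^s$ again produces $1=(xy)^{q^{n-1}}+ax^qy$, matching the first equivalence exactly.

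I expect the main obstacle to be precisely these congruences $qr\equiv s+1$ and $q^{n-1}r\equiv q^{n-1}+s\pmod{q^n-1}$: they are what dictate the choice of the exponents $r=\sum_{i=0}^{(n-1)/2}q^{2i}$ and $s=\sum_{i=1}^{(n-1)/2}q^{2i-1}$ and what force the two preimage equations to coincide. Verifying them reduces to the telescoping identities $qr=q+q^3+\cdots+q^n\equiv 1+s$ and $qs=q^2+q^4+\cdots+q^{n-1}=r-1$ (from which $q^{n-1}r\equiv q^{n-1}+s$ follows by raising to the power $q$), where the oddness of $n$ controls the parity of the surviving powers of $q$. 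The only other point requiring care is the bookkeeping at $x=0$, which is trivial since $F(0)=G(0)=0$.
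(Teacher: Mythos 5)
Your proposal is correct, and it reaches the conclusion by a genuinely different organization than the paper, although both proofs rest on the same exponent arithmetic. The paper's proof is a one-sided composition check: it first establishes the polynomial identity $\ell^{-1}(x^r)=L^{-1}(x)x^s$ (which encodes precisely your congruences $qr\equiv 1+s$ and $q^{n-1}r\equiv q^{n-1}+s\pmod{q^n-1}$), deduces $\ell\big(xL(x)^s\big)=L(x)^r$, and then verifies directly that $\frac{\ell(F(x)^s)}{F(x)^r}=x$ for $F(x)=\frac{L(x)}{x^{q+1}}$, using $x^{(q+1)s}=\N(x)/x$ and $x^{(q+1)r}=x\N(x)$ and pulling the scalar $\N(x)\in\mathbb F_q$ through the $\mathbb F_q$-linear map $\ell$. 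You never compose $F$ and $G$ at all: you show that on $\mathbb F_{q^n}^*$ the graphs of both maps are cut out by the single symmetric relation $(xy)^{q^{n-1}}+ax^qy=1$, obtained by applying $L^{-1}$, respectively $\ell^{-1}$, to the defining equations; I checked both reductions and the congruences, and they are all correct. Your route buys several things: it yields the two-sided inverse in one stroke, it in fact re-proves that $F$ is a bijection (so your citation of the preceding proposition is a convenience rather than a logical necessity), it dispenses with the norm bookkeeping and the $\mathbb F_q$-linearity of $\ell$, and it explains structurally why the exponents $r$ and $s$ are forced: they are exactly what makes the preimage relation symmetric in $x$ and $y$ up to the twist $(xy)^{q^{n-1}}+ax^qy$. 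The paper's route buys a compact, purely polynomial verification once the key identity $\ell^{-1}(x^r)=L^{-1}(x)x^s$ is observed, with no pointwise case analysis. One cosmetic slip to fix: $q^{n-1}r\equiv q^{n-1}+s$ follows from $qs=r-1$ by multiplying that congruence by $q^{n-1}$ (equivalently, dividing by $q$ modulo $q^n-1$), not by ``raising to the power $q$''; the claim itself is correct, as direct reduction of $rq^{n-1}$ modulo $q^n-1$ confirms.
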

\begin{proof}
Note first that
\[L^{-1}(x)x^s=x^{q^{n-1}+s}+ax^{1+s}=x^{rq^{n-1}}+ax^{rq}=\ell^{-1}(x^r),\]
and then
\[\ell(xL(x)^s)=L(x)^r.\]
It turns out that
\[\frac{\ell\left(\left(\frac{L(x)}{x^{q+1}}\right)^s\right)}{\left(\frac{L(x)}{x^{q+1}}\right)^r}=\frac{\ell\left(\frac{xL(x)^s}{\N(x)}\right)}{\frac{L(x)^r}{x\N(x)}}=\N(x)^{q^n-1}\frac{x\ell(xL(x)^s)}{L(x)^r}=xL(x)^{r(q^n-1)}=x.\]
\end{proof}

Note that both $\frac{L(x)}{x^{q+1}}$ and $\frac{\ell(x^s)}{x^r}$ have $n$ terms. For example, consider the case $n=3$ and let
\[L(x)=\frac{-a^qx^{q^2}+x^q+a^{q^2+q}x}{\N(a)+1},\]
the inverse of $x^{q^2}+ax$, with
\[\ell(x)=\frac{a^{q+1}x^{q^2}+x^q-a^qx}{\N(a)+1},\]
the inverse of $x^{q^2}+ax^q$. Then
\[(\N(a)+1)\frac{L(x)}{x^{q+1}}=-a^qx^{q^2-q-1}+x^{-1}+a^{q^2+q}x^{-q}\]
is a permutation polynomial of $\mathbb F_{q^3}$ with inverse
\[(\N(a)+1)\frac{\ell(x^q)}{x^{q^2+1}}=a^{q+1}x^{-q^2}+x^{-1}-a^qx^{q-q^2-1}.\]

\begin{theorem}
The inverse of $\frac{L(x^{q+1})}x$ as a permutation polynomial of $\mathbb F_{q^n}$ is $\frac{x^{-1}}{\ell(x^{-q-1})}$, if
\begin{itemize}
\item $L$ is the inverse of $x^q+ax$ for some $a\in\mathbb F_{q^n}^*$ with $\N(-a)\ne1$ and $\ell$ is the inverse of $ax^q+x$, or
\item the image of $L(x^{q+1})^{q+1}$ is contained in $\beta\mathbb F_{q^k}$ for some $\beta\in\mathbb F_{q^n}^*$ with $k$ dividing $n$, $\ell$ is a $q^k$-linear polynomial with $\ell(x)=\beta^{-1}L(\beta x)^q$, and $L(x^{q+1})$ has no root in $\mathbb F_{q^n}^*$.
\end{itemize}
In this case, $\frac{\ell(x^{q+1})}x$ is also a permutation polynomial of $\mathbb F_{q^n}$ with inverse $\frac{x^{-1}}{L(x^{-q-1})}$.
\end{theorem}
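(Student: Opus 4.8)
The plan is to prove a single composition identity and then use an involution to obtain the entire statement at once. For a $q$-linear polynomial $P$ write $F_P(x)=\frac{P(x^{q+1})}x$ and $G_P(x)=\frac{x^{-1}}{P(x^{-q-1})}$, and let $\iota(x)=x^{-1}$ be the inversion map, an involution of $\mathbb F_{q^n}$. A direct substitution gives $F_P(\iota(x))=xP(x^{-q-1})$, hence the identity $G_P=\iota\circ F_P\circ\iota$. Since $\iota^2=\mathrm{id}$, conjugating by $\iota$ turns $F_L\circ G_\ell=\mathrm{id}$ into $G_L\circ F_\ell=\mathrm{id}$; on the finite field a one-sided inverse is two-sided, so $F_L\circ G_\ell=\mathrm{id}$ and $F_\ell\circ G_L=\mathrm{id}$ are equivalent, and each forces the two maps involved to be mutually inverse permutations. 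It therefore suffices to prove just one of them. Before that I would record that $\N(-a)\ne1$ in the first case makes the binomials $M(x)=x^q+ax$ and $N(x)=ax^q+x$ (hence $L,\ell$) bijective, while the no-root condition in the second makes $L$ nonvanishing on nonzero $(q+1)$-th powers; in either case the composite being computed is a well-defined map fixing $0$, and the permutation property of all four maps falls out of the identity itself.

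Expanding either composition reduces it to a core identity. Writing $s=x^{-1}$ and $p=s^{q+1}=x^{-q-1}$, one computes $F_L(G_\ell(x))=L(p\,w^{-q-1})\cdot xw$ with $w=\ell(p)$, so that $F_L\circ G_\ell=\mathrm{id}$ amounts to $L(p\,w^{-q-1})=w^{-1}$; symmetrically $F_\ell\circ G_L=\mathrm{id}$ amounts to $\ell(p\,W^{-q-1})=W^{-1}$ with $W=L(p)$. For the first bullet I would verify the former using the explicit binomials above, for which $L=M^{-1}$ and $\ell=N^{-1}$. Setting $w=\ell(p)$ gives $p=N(w)=aw^q+w$, whence $p\,w^{-q-1}=aw^{-1}+w^{-q}=M(w^{-1})$, and applying $L=M^{-1}$ yields $L(p\,w^{-q-1})=w^{-1}$, exactly as needed.

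For the second bullet I would instead verify $\ell(p\,W^{-q-1})=W^{-1}$ with $W=L(p)$. The image hypothesis gives $W^{q+1}=L(p)^{q+1}=\beta c$ for some $c\in\mathbb F_{q^k}^*$, nonzero by the no-root condition, so $W^{-q-1}=\beta^{-1}c^{-1}$ and $p\,W^{-q-1}=c^{-1}(\beta^{-1}p)$. As $c^{-1}\in\mathbb F_{q^k}$ and $\ell$ is $q^k$-linear, I can pull $c^{-1}$ through $\ell$, and the defining relation $\ell(x)=\beta^{-1}L(\beta x)^q$ then gives $\ell(p\,W^{-q-1})=c^{-1}\ell(\beta^{-1}p)=c^{-1}\beta^{-1}L(p)^q=c^{-1}\beta^{-1}W^q=W^{-1}$, the last step using $W^{q+1}=\beta c$. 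I expect this case to be the main obstacle: one must align the $\beta$-twist in the definition of $\ell$ with the image condition so that the leftover scalar lands in $\mathbb F_{q^k}$ and is absorbed by the $q^k$-linearity of $\ell$, noting that it is the extended linearity of $\ell$, not that of $L$, that is exploited. Once the relevant composition identity is established in each case, the involution identity $G_P=\iota\circ F_P\circ\iota$ promotes it to the full conclusion, yielding $\frac{x^{-1}}{\ell(x^{-q-1})}$ as the inverse of $\frac{L(x^{q+1})}x$ and $\frac{x^{-1}}{L(x^{-q-1})}$ as the inverse of $\frac{\ell(x^{q+1})}x$.
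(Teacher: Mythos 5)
Your proof is correct and takes essentially the same approach as the paper: in each case you verify a single composition identity, and the algebraic core is identical --- your case-1 relation $p\,w^{-q-1}=M(w^{-1})$ is the paper's observation $\frac{L^{-1}(x)}{x^{q+1}}=\ell^{-1}\left(\frac1x\right)$ in disguise, and your case-2 computation (pulling the scalar $\beta^{-1}L(x^{q+1})^{q+1}\in\mathbb F_{q^k}^*$ through the $q^k$-linear $\ell$ and then applying $\ell(x)=\beta^{-1}L(\beta x)^q$) matches the paper's line by line. The only difference is organizational: you verify the compositions in the opposite order and make explicit, via the conjugation $G_P=\iota\circ F_P\circ\iota$ together with finiteness, the symmetry that the paper compresses into ``the results follow immediately.''
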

\begin{proof}
It suffices to show
\[\frac{L(x^{q+1})}x\ell\left(\left(\frac x{L(x^{q+1})}\right)^{q+1}\right)=\frac1x.\]
Assume the first condition. Then
\[\frac{L^{-1}(x)}{x^{q+1}}=\ell^{-1}\left(\frac1x\right),\]
and
\[\ell\left(\frac{L^{-1}(x)}{x^{q+1}}\right)=\frac1x.\]
Substitute $L^{-1}(x)$ for $x$ to get
\[\ell\left(\frac x{L(x)^{q+1}}\right)=\frac1{L(x)},\]
which implies
\[\ell\left(\frac{x^{q+1}}{L(x^{q+1})^{q+1}}\right)=\frac1{L(x^{q+1})},\]
and
\[\frac{L(x^{q+1})}x\ell\left(\left(\frac x{L(x^{q+1})}\right)^{q+1}\right)=\frac{L(x^{q+1})^{q^n-1}}x=\frac1x.\]

Similarly, the second condition implies
\[\begin{split}&\mathrel{\phantom{=}}\frac{L(x^{q+1})}x\ell\left(\left(\frac x{L(x^{q+1})}\right)^{q+1}\right)\\&=\frac{L(x^{q+1})}x\ell\left(\frac{\beta^{-1}x^{q+1}}{\beta^{-1}L(x^{q+1})^{q+1}}\right)\\&=\frac\beta{xL(x^{q+1})^q}\ell\left(\beta^{-1}x^{q+1}\right)\\&=\frac1xL(x^{q+1})^{q(q^n-1)}\\&=\frac1x.\end{split}\]
The results follow immediately.
\end{proof}

\begin{corollary}
Let $n$ be even with $n=2k$. Then $\frac{L(x^{q+1})}x$ is a permutation polynomials of $\mathbb F_{q^n}$ with inverse $\frac{x^{-1}}{L(x^{-q-1})^q}$, if
\begin{itemize}
\item $k$ is odd and $L(x)=(bx)^{q^{n-1}}+(bx)^{q^{k-1}}$ for $b\in\mathbb F_{q^n}^*$ such that $b^\frac{q^n-1}{q+1}\ne-1$, or
\item $k$ is even, $q$ is odd, and $L(x)=(bx)^{q^{n-1}}+(bx)^{q^{k-1}}$ for a square element $b\in\mathbb F_{q^n}^*$, or
\item $n=4$, $q$ is odd, and $L(x)=(ax^q)^{q^2}+(bx)^{q^2}+ax^q+bx$ for square elements $a,b\in\mathbb F_{q^4}^*$ such that $ab^{-q}\in\mathbb F_{q^2}$ and $\N(a)\ne\N(b)$.
\end{itemize}
\end{corollary}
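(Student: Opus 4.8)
The plan is to realize all three cases as instances of the second alternative of the preceding theorem in which one may take $\beta=1$, so that the auxiliary polynomial is simply $\ell(x)=L(x)^q$ and the inverse automatically takes the advertised shape $\frac{x^{-1}}{\ell(x^{-q-1})}=\frac{x^{-1}}{L(x^{-q-1})^q}$. Thus I would first verify the two structural hypotheses of that theorem — that the image of $L(x^{q+1})^{q+1}$ lies in the relevant $\mathbb F_{q^k}$, and that $\ell=L^q$ is $q^k$-linear — after which the entire weight of the argument falls on showing that $L(x^{q+1})$ has no root in $\mathbb F_{q^n}^*$; this last point is exactly what promotes the formal identity of the theorem into a genuine permutation together with its inverse.

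For the structural part in the first two cases ($k=n/2$), I would note that $L(z)^{q^k}=L(z)$ for every $z$: raising $L(z)=b^{q^{n-1}}z^{q^{n-1}}+b^{q^{k-1}}z^{q^{k-1}}$ to the $q^k$ and reducing the Frobenius exponents modulo $n=2k$ returns the same two terms. Hence $L$ takes values in $\mathbb F_{q^k}$, the image condition is automatic, and $\ell(x)=L(x)^q=bx+b^{q^k}x^{q^k}$ visibly has only the exponents $q^{k\cdot 0}$ and $q^{k\cdot 1}$, i.e. is $q^k$-linear. In the third case I would rewrite $L=M+M^{q^2}$ with $M(x)=bx+ax^q$, so that $L$ is the relative trace of $M$ from $\mathbb F_{q^4}$ to $\mathbb F_{q^2}$; then $L(x^{q+1})\in\mathbb F_{q^2}$, and $L(x^{q+1})^{q+1}$, being the relative norm of that value down to $\mathbb F_q$, lies in $\mathbb F_q$. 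Here one takes $k=1$, $\beta=1$, and $\ell=L^q$ is $q$-linear for free.

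The heart of the matter is the no-root statement. In the first two cases I would factor
\[L(x^{q+1})=b^{q^{k-1}}x^{(q+1)q^{k-1}}\bigl((bx^{q+1})^{q^{k-1}(q^k-1)}+1\bigr),\]
so a nonzero root forces $(bx^{q+1})^{q^{k-1}(q^k-1)}=-1$; raising to the power $q^{k+1}$ (a bijection fixing $-1$) this becomes $b^{q^k-1}x^{(q+1)(q^k-1)}=-1$. As $x$ runs over $\mathbb F_{q^n}^*$, the quantity $x^{(q+1)(q^k-1)}$ runs over the subgroup of order $(q^k+1)/\gcd(q+1,q^k+1)$. For $k$ odd this gcd is $q+1$, and $(q^k+1)/(q+1)$ is odd (for odd $q$, and trivially in characteristic two), so a solution would be equivalent to $b^{(q^n-1)/(q+1)}=-1$, which is excluded. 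For $k$ even and $q$ odd the gcd is $2$, and since $q^k\equiv1\pmod 8$ the exponent $(q^k+1)/2$ is odd; with $b$ a square one has $b^{(q^n-1)/2}=1$, so a solution would force $(-1)^{(q^k+1)/2}=1$, which is false. Either way $L(x^{q+1})$ has no nonzero root, and the theorem yields the stated inverse.

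For the third case the no-root condition becomes the claim that $M(w)$ avoids $\ker(y\mapsto y+y^{q^2})=\delta\mathbb F_{q^2}$ (with $\delta^{q^2-1}=-1$) as $w$ runs over the nonzero $(q+1)$-th powers. Writing $w=v^{q+1}$ and $r=v^{q^2-1}\in\mu_{q^2+1}$, one gets $M(w)=v^{q+1}(b+ar)$, and the condition splits into two branches. The branch $b+ar=0$ needs $-b/a\in\mu_{q^2+1}$, i.e. $a^{q^2+1}=b^{q^2+1}$; since $\N(a)$ is the $\mathbb F_{q^2}/\mathbb F_q$ norm of $a^{q^2+1}$ and likewise for $b$, the hypothesis $\N(a)\ne\N(b)$ rules this out (and also gives $\det\ne 0$ below). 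The remaining branch reduces to
\[F(r):=b^{q^2}r^{q+1}+a^{q^2}r^q+ar+b=0,\qquad r\in\mu_{q^2+1},\]
equivalently $r^q\sigma(r)=-1$ for the Möbius map $\sigma(r)=\frac{b^{q^2}r+a^{q^2}}{ar+b}$, whose coefficient array is conjugate-symmetric (so $\sigma$ preserves $\mu_{q^2+1}$). This last step is the main obstacle. My plan is to use the standard identification $\mu_{q^2+1}\cong\mathbb P^1(\mathbb F_{q^2})$ to turn $F(r)=0$ into a norm-form (conic) equation over $\mathbb F_q$; because the coefficients satisfy the Hermitian symmetry, such an equation has $0$, $1$, or $2$ solutions according to whether a discriminant is a non-square, and I expect the hypotheses that $a,b$ are squares and $ab^{-q}\in\mathbb F_{q^2}$ to be exactly what forces that discriminant into the non-square case, so that $F$ has no root on $\mu_{q^2+1}$. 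Once this is established, the theorem applies with $k=1$, $\beta=1$, $\ell=L^q$, giving the inverse $\frac{x^{-1}}{L(x^{-q-1})^q}$.
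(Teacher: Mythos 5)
Your reduction to the second bullet of the preceding theorem with $\beta=1$ and $\ell=L^q$ is exactly right, and your treatment of the first two cases is correct and complete: the factorization $L(x^{q+1})=(bx^{q+1})^{q^{k-1}}\big((bx^{q+1})^{q^{k-1}(q^k-1)}+1\big)$ together with the count of the image of $x\mapsto x^{(q+1)(q^k-1)}$ (a subgroup of order $(q^k+1)/\gcd(q+1,q^k+1)$) is an equivalent repackaging of the paper's computation, which instead writes any root of $L$ as $b^{-1}\tau$ with $\tau^{q^k}+\tau=0$ and checks directly that $(b^{-1}\tau)^{(q^n-1)/(q+1)}\ne1$, so that $b^{-1}\tau$ cannot be a $(q+1)$-st power. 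The parity facts you invoke ($\frac{q^k+1}{q+1}$ odd for odd $k$; $\frac{q^k+1}{2}$ odd for even $k$ and odd $q$, since $q^k\equiv1\pmod 8$) are the same ones the paper uses, so up to this point the two arguments differ only in bookkeeping.

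The third case, however, contains a genuine gap, and you flag it yourself: you reduce the no-root claim to showing that $F(r)=b^{q^2}r^{q+1}+a^{q^2}r^q+ar+b$ has no zero on $\mu_{q^2+1}$, and then only ``expect'' a discriminant analysis to close it. Besides being unproven, this route is in danger of circularity: $F$ has degree $q+1$, not $2$, so conic/discriminant language does not directly apply, and if you parametrize $\mu_{q^2+1}$ by $r=z^{q^2-1}$ (Hilbert 90) and clear denominators, $F(r)=0$ turns back into $\big(bz^{q+1}+az^{q^2+q}\big)+\big(bz^{q+1}+az^{q^2+q}\big)^{q^2}=L(z^{q+1})=0$, which is precisely the statement you set out to prove. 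The paper's argument avoids all of this and is where the hypothesis $ab^{-q}\in\mathbb F_{q^2}$ actually does its work: writing a putative root of $L$ as $b^{-1}\tau$, the membership $(ab^{-q})^{q^2}=ab^{-q}$ gives the identity $L(b^{-1}\tau)=ab^{-q}\big(\tau^{q^2}+\tau\big)^q+\big(\tau^{q^2}+\tau\big)$, and since $y\mapsto ab^{-q}y^q+y$ is injective on $\mathbb F_{q^4}$ (because $\N(-ab^{-q})=\N(a)\N(b)^{-1}\ne1$), any root forces $\tau^{q^2}+\tau=0$. This drops you exactly into your already-settled even-$k$ computation ($k=2$, $q$ odd, $b$ a square), which shows $b^{-1}\tau$ is not a $(q+1)$-st power; replacing your M\"obius/conic plan by this two-line collapse completes the proof.
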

\begin{proof}
In each case, the image of $L(x^{q+1})^{q+1}$ is contained in some subfield of $\mathbb F_{q^n}$, over which $L(x)^q$ induces a linear endomorphism, so it remains to show that $L(x^{q+1})$ has no root in $\mathbb F_{q^n}^*$.

For $L(x)=(bx)^{q^{n-1}}+(bx)^{q^{k-1}}$, note that any root of $L$ in $\mathbb F_{q^n}^*$ is $b^{-1}\tau$ for some $\tau\in\mathbb F_{q^n}^*$ such that $\tau^{q^k}+\tau=0$. If $k$ is odd and $b^\frac{q^n-1}{q+1}\ne-1$, then
\[(b^{-1}\tau)^\frac{q^n-1}{q+1}=(b^{-1}\tau)^{(q^k-1)\frac{q^k+1}{q+1}}=\big(-b^{1-q^k}\big)^\frac{q^k+1}{q+1}=-b^\frac{1-q^n}{q+1}\ne1.\]
If $k$ is even, $q$ is odd and $b^\frac{q^n-1}2=1$, then $(b^{-1}\tau)^\frac{q^n-1}{q+1}\ne1$, for
\[(b^{-1}\tau)^{\frac{q^n-1}{q+1}\frac{q+1}2}=(b^{-1}\tau)^\frac{q^n-1}2=\tau^{(q^k+1)\frac{q^k-1}2}=(-1)^\frac{q^k-1}2\tau^{q^k-1}=-1.\]
In both cases $b\tau^{-1}$ can not be a $(q+1)$-st power in $\mathbb F_{q^n}^*$, so $L(x^{q+1})$ has no root in $\mathbb F_{q^n}^*$.

Consider the last case. Note that $ab^{-q}x^q+x$ is a permutation polynomial of $\mathbb F_{q^4}$ as $\N(-ab^{-q})\ne1$. For $\tau\in\mathbb F_{q^4}$, if $L(b^{-1}\tau)=0$, then
\[ab^{-q}\big(\tau^{q^2}+\tau\big)^q+\tau^{q^2}+\tau=L(b^{-1}\tau)=0,\]
which means $\tau^{q^2}+\tau=0$. By the same argument, $(b^{-1}\tau)^\frac{q^n-1}{q+1}\ne1$ and $L(x^{q+1})$ has no root in $\mathbb F_{q^4}^*$.
\end{proof}

The above results show a way to obtain a permutation polynomial by indicating its inverse. In what follows, another example is given for the case $n=3$ (cf. \cite[Theorem 1]{xie2023two} and Example \ref{e0}).

\begin{proposition}
Let $q$ be odd and $a\in\mathbb F_{q^3}^*$ with $\N(a)\ne-1$. Then $x^{q^2-q+1}+2ax+a^2x^{q^3-q^2+q}$ is a permutation polynomial of $\mathbb F_{q^3}$ with inverse
\[\frac{-a^{2q+1}x^{q^2}+a^{q^2+q}x-ax^q+2a^{q+1}x^\frac{q^4+q^2}2+(1-\N(a))x^\frac{q^3+q}2+(\N(a)-1)a^qx^\frac{q^2+1}2}{(\N(a)+1)^2}.\]
\end{proposition}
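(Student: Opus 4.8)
The plan is to uncover a hidden square structure. Writing $m=q^2-q+1$, the polynomial is $f(x)=x^m+2ax+a^2x^{mq}$, and the (non-invertible) substitution $x=z^{q+1}$ collapses it: since $(q+1)m=q^3+1\equiv2$ and $(q+1)mq\equiv2q\pmod{q^3-1}$, using $z^{q^3}=z$ one finds
\[f(z^{q+1})=z^2+2az^{q+1}+a^2z^{2q}=\ell(z)^2,\qquad\ell(z):=z+az^q.\]
First I would record that $\ell$ is a $q$-linear permutation of $\mathbb F_{q^3}$: indeed $\ell(z)=a(z^q+a^{-1}z)$, and $z^q+a^{-1}z$ permutes iff $\N(-a^{-1})\ne1$, which for $n=3$ reads $-\N(a)^{-1}\ne1$, i.e.\ exactly $\N(a)\ne-1$.

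For the permutation property, note that every exponent of $f$ is $\equiv1\pmod{q-1}$, so $f(\alpha x)=\alpha f(x)$ for all $\alpha\in\mathbb F_q$ and $f$ commutes with $\mathbb F_q^*$-scaling. Since $q$ is odd and $3$ is odd there is a non-square $\alpha\in\mathbb F_q$ (a non-square of $\mathbb F_q$ remains a non-square in $\mathbb F_{q^3}$, because $\eta_3(\alpha)=\eta_1(\alpha)^{q^2+q+1}=\eta_1(\alpha)$ for $\alpha\in\mathbb F_q^*$), and multiplication by $\alpha$ interchanges the squares and non-squares of $\mathbb F_{q^3}^*$. Hence it suffices to show $f$ bijects the nonzero squares. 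Every nonzero square equals $z^{q+1}$, the image of $z\mapsto z^{q+1}$ being the index-$2$ subgroup as $\gcd(q+1,q^3-1)=2$, with $z$ determined up to sign; since $\ell$ is bijective and $\ell(-z)=-\ell(z)$, the identity $f(z^{q+1})=\ell(z)^2$ shows $z^{q+1}\mapsto\ell(z)^2$ is a well-defined bijection from squares to squares. Homogeneity then gives a bijection on non-squares, and with $f(0)=0$ we conclude $f$ permutes $\mathbb F_{q^3}$.

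For the inverse, the same identity yields, on the squares, $f^{-1}(w)=\ell^{-1}(w^{1/2})^{q+1}$, the choice of square root being immaterial since $q+1$ is even and $\ell^{-1}$ is linear. Writing $\ell^{-1}(w)=b_0w+b_1w^q+b_2w^{q^2}$, whose coefficients come from Example \ref{e0} (equivalently Lemma \ref{inverse}) applied to $z^q+a^{-1}z$ followed by scaling the argument by $a^{-1}$, and expanding $\ell^{-1}(u)^{q+1}=\ell^{-1}(u)\ell^{-1}(u)^q$ with $u=w^{1/2}$ produces nine terms. The three diagonal terms $u^2,u^{2q},u^{2q^2}$ have integer exponents and assemble into the $q$-linear part $-a^{2q+1}x^{q^2}+a^{q^2+q}x-ax^q$ over $(\N(a)+1)^2$; the six off-diagonal terms pair into the three cross exponents $(q^i+q^{j+1})/2$ and, after using $a^{q^3}=a$ and $a^{1+q+q^2}=\N(a)$, yield the coefficients $2a^{q+1}$, $1-\N(a)$, $(\N(a)-1)a^q$.

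The step I expect to be delicate is the bookkeeping of the cross exponents. The square-root expansion naturally produces $(q+1)/2$, $(q^2+1)/2$, $(q^2+q)/2$, but only the middle one is $\equiv1\pmod{q-1}$; the other two must be replaced by $(q^3+q)/2$ and $(q^4+q^2)/2$, which differ from them by multiples of $(q^3-1)/2$ and hence agree with them as maps on squares (where the quadratic character is trivial), while being the homogeneous degree-$1$ representatives. Only with these representatives is the resulting polynomial $g$ itself $\mathbb F_q$-homogeneous of degree $1$, and that homogeneity, together with the verified identity on squares, forces $g(f(x))=x$ on the non-squares as well via $g(f(\alpha x))=g(\alpha f(x))=\alpha g(f(x))$. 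This pins down $g$ as the inverse; the intervening simplifications of $b_0,b_1,b_2$ are routine.
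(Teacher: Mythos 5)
Your proof is correct, but it takes a genuinely different route from the paper's. The paper never splits $\mathbb F_{q^3}^*$ into squares and non-squares: it sets $g(x)=x-a^{-q^2}x^{\frac{q^2+1}2}-ax^{\frac{q^3+q}2}$ and verifies the single composition identity $g(f(x))=-a^{-q^2}(\N(a)+1)\big(x^{q^2}+ax^q\big)$, using the factorization $f(x)=\big(x^{q^2}+ax^q\big)^2/x^{q^2+q-1}$ to compute $f(x)^{\frac{q^2+1}2}$ exactly (the exponent adjustments there are integer multiples of $q^3-1$, so the identity holds for \emph{all} $x$, not just squares); this one identity simultaneously shows $f$ is a permutation (because $x^{q^2}+ax^q$ is one when $\N(a)\ne-1$) and gives $f^{-1}$ as the known inverse of that linearized binomial composed with $g$, which is then expanded. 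You instead parametrize the nonzero squares as $(q+1)$-st powers, collapse $f(z^{q+1})=\ell(z)^2$ with $\ell(z)=z+az^q$, prove bijectivity class-by-class using $\mathbb F_q$-homogeneity, and recover the inverse as $\ell^{-1}(w^{1/2})^{q+1}$ on squares; your bookkeeping is sound --- indeed $\ell^{-1}(x)=\big(x-ax^q+a^{q+1}x^{q^2}\big)/(\N(a)+1)$ by Lemma \ref{inverse}, the nine-term expansion of $\ell^{-1}(u)\,\ell^{-1}(u)^q$ reproduces exactly the stated coefficients, and your replacement of $(q+1)/2$, $(q^2+q)/2$ by $(q^3+q)/2$, $(q^4+q^2)/2$ (differing by multiples of $(q^3-1)/2$, hence equal on squares, and the unique representatives $\equiv1\pmod{q-1}$) is precisely what makes the extension to non-squares legitimate. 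What your route buys is transparency: it explains \emph{why} the half-exponents appear and why those particular representatives are forced --- a subtlety the paper's choice of $g$ handles silently --- and it isolates a mechanism ($f(z^{q+1})$ a perfect square of a linearized polynomial) that could generalize. What the paper's route buys is economy: one polynomial identity valid on all of $\mathbb F_{q^3}$ replaces your separate permutation argument and the square/non-square case analysis. The two proofs rest on the same hidden square, since $x^{q^2}+ax^q$ evaluated at $x=z^{q+1}$ equals $z^{q^2}\ell(z)$, so your collapse identity is the paper's factorization read along the square class.
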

\begin{proof}
Let $g(x)=x-a^{-q^2}x^\frac{q^2+1}2-ax^\frac{q^3+q}2$. Note that
\[f(x)=\frac{\big(x^{q^2}+ax^q\big)^2}{x^{q^2+q-1}}.\]
Then
\[f(x)^\frac{q^2+1}2=\frac{\big(x^{q^2}+ax^q\big)^{q^2+1}}{x^q}=x^{q^2}+ax^q+a^{q^2}x^{q^2-q+1}+a^{q^2+1}x,\]
and hence,
\[\begin{split}g(f(x))&=x^{q^2-q+1}+2ax+a^2x^{q^3-q^2+q}-a^{-q^2}\big(x^{q^2}+ax^q+a^{q^2}x^{q^2-q+1}+a^{q^2+1}x\big)\\&\mathrel{\phantom{=}-}a\big(x+a^qx^{q^2}+ax^{q^3-q^2+q}+a^{q+1}x^q\big)\\&=-a^{-q^2}(\N(a)+1)\big(x^{q^2}+ax^q\big),\end{split}\]
which is a permutation polynomial of $\mathbb F_{q^3}$ with inverse
\[\frac{-a^{2q+1}x^{q^2}+a^{q^2+q}x-ax^q}{(\N(a)+1)^2}.\]
Accordingly, the inverse of $f$ is
\[\frac{-a^{2q+1}g(x)^{q^2}+a^{q^2+q}g(x)-ag(x)^q}{(\N(a)+1)^2}.\]
Expanding this yields the desired result.
\end{proof}

\section{A More General Form of Permutation Polynomials}

Given two $q$-linear permutation polynomial $L$ and $\ell$ over $\mathbb F_{q^n}$, consider the polynomial in the form $\frac{L(x^{q+1})}{\ell(x)}$. In the subsequent part, we discuss this kind of permutation polynomials and provide some instances.

First, assume that $q$ is odd. For $t\in\mathbb F_{q^n}$, let $N_t$ be the number of roots of $\frac{L(x^{q+1})}{\ell(x)}+t$ in $\mathbb F_{q^n}$. Clearly $N_0=1$ and $N_t+1$ is the number of roots of $L(x^{q+1})+t\ell(x)$ in $\mathbb F_{q^n}$ if $t\ne0$, which can be determined using character sums. Let $\chi$ be the canonical additive character of $\mathbb F_{q^n}$. Then for $t\in\mathbb F_{q^n}^*$,
\[q^n(N_t+1)=\sum_{w\in\mathbb F_{q^n}}\sum_{u\in\mathbb F_{q^n}}\chi\left(u(L(w^{q+1})+t\ell(w))\right)=\sum_{u\in\mathbb F_{q^n}}\sum_{w\in\mathbb F_{q^n}}\chi\left(L^\prime(u)w^{q+1}+\ell^\prime(tu)w\right).\]
The inner sum
\[\sum_{w\in\mathbb F_{q^n}}\chi\left(L^\prime(u)w^{q+1}+\ell^\prime(tu)w\right)\]
can be evaluated according to \cite[Theorem 1]{coulter1998further}. In the following, we give a refined version of that for odd $n$. Let $\eta$ be the quadratic character of $\mathbb F_{q^n}$ with $\eta(0)=0$, $G$ be the corresponding Gauss sum (see \cite[Theorem 5.15]{lidl1997}), and
\[\lambda(x)=\sum_{i=0}^{n-1}(-1)^{i+1}x^{q^{2i}}.\]

\begin{lemma}
Let $q$ and $n$ be odd and $A,B\in\mathbb F_{q^n}$ with $A\ne0$. Then
\[\sum_{w\in\mathbb F_{q^n}}\chi(Aw^{q+1}+Bw)=\eta(A)G\chi\left(-\frac{\lambda(A^sB^q)^{q+1}}{4\N(A)}\right),\]
where $s=\sum_{i=1}^\frac{n-1}2q^{2i}$.
\end{lemma}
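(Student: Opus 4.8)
The plan is to evaluate the sum by completing the square over $\mathbb F_q$. Writing $\chi=\psi\circ\Tr$ with $\psi$ the canonical additive character of $\mathbb F_q$, the exponent $Aw^{q+1}+Bw$ becomes, after applying $\Tr$, a quadratic form plus a linear form in $w$ viewed as a vector over $\mathbb F_q$. Its associated symmetric bilinear form sends $(w,v)$ to $\Tr\big((Aw^q+(Aw)^{q^{n-1}})v\big)$, so the ``gradient'' of the quadratic part is the $q$-linear map $g(w)=Aw^q+A^{q^{n-1}}w^{q^{n-1}}$. First I would isolate the two facts that drive everything: since $\chi$ is additive, once a shift $w_0$ with $g(w_0)=-B$ is found, the identity $\Tr(Aw^{q+1}+Bw)=\Tr(A(w-w_0)^{q+1})-\Tr(Aw_0^{q+1})$ lets me factor
\[S=\sum_{w\in\mathbb F_{q^n}}\chi(Aw^{q+1}+Bw)=\chi(-Aw_0^{q+1})\sum_{u\in\mathbb F_{q^n}}\chi(Au^{q+1});\]
and the homogeneous sum equals $\eta(A)G$. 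The whole statement then reduces to identifying the constant $-Aw_0^{q+1}$.

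For the homogeneous sum I would use that, for odd $q$ and odd $n$, $\gcd(q+1,q^n-1)=2$, so $u\mapsto u^{q+1}$ is two-to-one from $\mathbb F_{q^n}^*$ onto the nonzero squares. Splitting the resulting sum over squares via $\tfrac12(1+\eta)$ and substituting $w\mapsto A^{-1}w$ turns the relevant piece into the Gauss sum $G=\sum_w\eta(w)\chi(w)$ with a factor $\eta(A)$; the constant contributions cancel and leave exactly $\sum_u\chi(Au^{q+1})=\eta(A)G$.

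The core of the argument is to produce $w_0$ in closed form. Raising $g(w_0)=-B$ to the power $q$ turns it into the $q^2$-linearized equation $A^qw_0^{q^2}+Aw_0=-B^q$, that is $\ell_0(w_0)=-A^{-q}B^q$ with $\ell_0(x)=x^{q^2}+A^{1-q}x$. Since $n$ is odd, $\gcd(2,n)=1$, and because $\N(A)\in\mathbb F_q$ and $n$ is odd we have $\N(-A^{1-q})=-1\neq1$, so Lemma \ref{inverse} (with $k=2$ and $a=A^{1-q}$) applies and furnishes $w_0$ explicitly as a $q^2$-linear expression in $B^q$ whose coefficients are powers of $A$. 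I would then show this expression is proportional to $\lambda(A^sB^q)$: comparing the coefficient of $B^{q^{2i+1}}$ in the two expressions, the ratio is $\tfrac12A^{f(i)}$ with $f(i)=E_i-q^{2i+1}-sq^{2i}$ and $E_i=\sum_{j=0}^i(q^{2j+1}-q^{2j})$, and a telescoping computation shows $f(i)-f(i-1)\equiv0\pmod{q^n-1}$. Hence $f(i)$ is constant and $w_0=\tfrac12A^{-(1+s)}\lambda(A^sB^q)$.

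Finally I would confirm $-Aw_0^{q+1}=-\lambda(A^sB^q)^{q+1}/(4\N(A))$ by checking the single exponent identity $(1+s)(1+q)\equiv1+(1+q+\cdots+q^{n-1})\pmod{q^n-1}$, which yields $\big(A^{-(1+s)}\big)^{q+1}=1/(A\N(A))$ and thus the desired constant. The main obstacle is precisely this bookkeeping of $q$-power exponents modulo $q^n-1$: casting the shift into the shape $\lambda(A^sB^q)$ (the constancy of $f(i)$) and matching the norm factor in the denominator. Everything else — the factorization via completing the square and the homogeneous Gauss-sum evaluation — is routine once the shift is in hand.
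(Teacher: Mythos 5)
Your proof is correct, and it takes a genuinely more self-contained route than the paper for the first half. The paper simply invokes Coulter's evaluation \cite[Theorem 1]{coulter1998further}, which directly gives $\sum_w\chi(Aw^{q+1}+Bw)=\eta(A)G\chi(-A\theta^{q+1})$ with $\theta$ the unique solution of $A^q\theta^{q^2}+A\theta+B^q=0$; you instead rederive this from scratch by completing the square (your shift equation $A^qw_0^{q^2}+Aw_0=-B^q$ is exactly Coulter's equation for $\theta$, and your observation that $\N(-A^{1-q})=(-1)^n\N(A)^{1-q}=-1\ne1$ correctly guarantees existence and uniqueness of the shift) together with the standard two-to-one evaluation $\sum_u\chi(Au^{q+1})=\eta(A)G$ via $\gcd(q+1,q^n-1)=2$. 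In the second half the two arguments converge — both apply Lemma \ref{inverse} to solve the linearized equation — but the paper's version is slicker: it multiplies the equation by $A^s$, using $(1+s)q^2\equiv s+q\pmod{q^n-1}$, so that it becomes $y^{q^2}+y=-A^sB^q$ in $y=A^{1+s}\theta$, whence Lemma \ref{inverse} with $a=1$ immediately yields $\theta=\lambda(A^sB^q)/(2A^{1+s})$ with no exponent bookkeeping. You apply Lemma \ref{inverse} directly with $a=A^{1-q}$ and then must verify the constancy of $f(i)$ by telescoping; this does work (the key congruence reduces to $s(q^2-1)\equiv-q(q-1)\pmod{q^n-1}$, which holds since $sq^2-s\equiv q-q^2$), but if you adopt the paper's normalization you can delete that computation entirely. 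Your final norm identity $(1+s)(q+1)\equiv1+\frac{q^n-1}{q-1}\pmod{q^n-1}$ matches the paper's closing step. In short: your argument buys independence from Coulter's theorem at the cost of length; the paper buys brevity at the cost of an external citation and hides the completing-the-square mechanism your proof makes explicit.
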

\begin{proof}
By \cite[Theorem 1]{coulter1998further}, we have
\[\sum_{w\in\mathbb F_{q^n}}\chi(Aw^{q+1}+Bw)=\eta(A)G\chi(-A\theta^{q+1}),\]
where $\theta$ is the unique element in $\mathbb F_{q^n}$ such that $A^q\theta^{q^2}+A\theta+B^q=0$. Note that
\[(A^{1+s}\theta)^{q^2}+A^{1+s}\theta+A^sB^q=A^s(A^q\theta^{q^2}+A\theta+B^q)=0.\]
Then by Lemma \ref{inverse},
\[\theta=\frac{\sum_{i=0}^{n-1}(-1)^i(-A^sB^q)^{q^{2i}}}{2A^{1+s}}=\frac{\lambda(A^sB^q)}{2A^{1+s}},\]
and
\[\theta^{q+1}=\frac{\lambda(A^sB^q)^{q+1}}{4A\N(A)},\]
as desired.
\end{proof}

Now we are ready to provide a characterization of permutation polynomials of the form $\frac{L(x^{q+1})}{\ell(x)}$.

\begin{proposition}
Let $q$ and $n$ be odd. Then $\frac{L(x^{q+1})}{\ell(x)}$ is a permutation polynomial of $\mathbb F_{q^n}$ if and only if $\frac{M_t}{q-1}$ is even for every $t\in\mathbb F_{q^n}^*$, where $M_t$ is the number of roots of $\Tr\big(\lambda(L^\prime(x)^s\ell^\prime(tx)^q)^{q+1}\big)$ in $\mathbb F_{q^n}^*$ with $s=\sum_{i=1}^\frac{n-1}2q^{2i}$.
\end{proposition}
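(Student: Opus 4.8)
The plan is to count, for each $t\in\mathbb F_{q^n}^*$, the number $N_t$ and to show it equals $1$ for all such $t$ precisely when the stated parity condition holds. The first step I would record is the homogeneity $\frac{L((\zeta x)^{q+1})}{\ell(\zeta x)}=\zeta\frac{L(x^{q+1})}{\ell(x)}$ for $\zeta\in\mathbb F_q^*$, which is immediate from $q$-linearity of $L,\ell$ and $(\zeta x)^{q+1}=\zeta^2x^{q+1}$. Since $\frac{L(x^{q+1})}{\ell(x)}$ fixes $0$ and sends $\mathbb F_{q^n}^*$ into itself, this homogeneity makes it map each $\mathbb F_q^*$-orbit of $\mathbb F_{q^n}^*$ bijectively onto another; hence it permutes $\mathbb F_{q^n}$ if and only if the induced map on the $R:=\frac{q^n-1}{q-1}$ orbits is a bijection, i.e.\ $N_t=1$ for every $t\ne0$. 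Because $N_t$ depends only on the orbit of $t$ and summing $N_t$ over a system of orbit representatives gives $R$ while there are exactly $R$ orbits, it will suffice to prove that $N_t$ is \emph{odd} for all $t\ne0$: then each $N_t\ge1$ and the sum constraint forces every $N_t=1$.

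Next I would start from the character-sum identity already displayed, apply the previous Lemma to the inner sum, and peel off the term $u=0$ (which contributes $q^n$) to obtain
\[q^nN_t=G\sum_{u\in\mathbb F_{q^n}^*}\eta(L^\prime(u))\chi\left(-\frac{\lambda(L^\prime(u)^s\ell^\prime(tu)^q)^{q+1}}{4\N(L^\prime(u))}\right).\]
Writing $\Phi_t(u)$ for the argument of $\chi$, a computation with $q$-linearity gives the scaling rules $\eta(L^\prime(\zeta u))=\eta(\zeta)\eta(L^\prime(u))$, $\N(L^\prime(\zeta u))=\zeta^n\N(L^\prime(u))$ and (because $n$ is odd, so $\zeta^s=\zeta^{(n-1)/2}$) $\Phi_t(\zeta u)=\zeta\Phi_t(u)$ for $\zeta\in\mathbb F_q^*$; likewise $\Tr\big(\lambda(L^\prime(\zeta u)^s\ell^\prime(t\zeta u)^q)^{q+1}\big)=\zeta^{n+1}\Tr\big(\lambda(L^\prime(u)^s\ell^\prime(tu)^q)^{q+1}\big)$. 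In particular the root set counted by $M_t$ is $\mathbb F_q^*$-invariant, so $q-1\mid M_t$, and $\Tr(\Phi_t(u))=0$ holds exactly on those orbits.

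I would then group the sum over $\mathbb F_q^*$-orbits. Using $\chi=\chi_0\circ\Tr$ with $\chi_0$ the canonical character of $\mathbb F_q$, each orbit contributes $\eta(L^\prime(u))\sum_{\zeta\in\mathbb F_q^*}\eta(\zeta)\chi_0(-\zeta\,\Tr(\Phi_t(u)))$, which vanishes when $\Tr(\Phi_t(u))=0$ and otherwise equals $\pm g$, where $g$ is the quadratic Gauss sum over $\mathbb F_q$. Thus $q^nN_t=Gg\,S_t$, where $S_t$ is a sum of $\pm1$ over exactly the $R-\frac{M_t}{q-1}$ orbits on which the trace is nonzero. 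Since $n$ is odd, Davenport--Hasse gives $G=g^n$, so $Gg=g^{n+1}=(\eta(-1)q)^{(n+1)/2}=\pm q^{(n+1)/2}$, a nonzero integer; hence $q^{(n-1)/2}N_t=\pm S_t$.

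Finally I would read off the parity. As $q^{(n-1)/2}$ is odd, $N_t\equiv S_t\pmod2$, and as $S_t$ is a sum of $R-\frac{M_t}{q-1}$ terms each $\pm1$, $S_t\equiv R-\frac{M_t}{q-1}\pmod2$. Since $n$ odd makes $R$ odd, this yields $N_t\equiv1+\frac{M_t}{q-1}\pmod2$, so $N_t$ is odd exactly when $\frac{M_t}{q-1}$ is even; together with the first paragraph this establishes both implications. The step I expect to be the main obstacle is the bookkeeping in the scaling rules — tracking how the exponents $s$, $q$, $q+1$ and the norm interact so that $\Phi_t$ scales linearly while the trace scales by $\zeta^{n+1}$ — together with pinning down $Gg$ as $\pm q^{(n+1)/2}$ via Davenport--Hasse; once those are in hand, the concluding parity-plus-counting argument is short.
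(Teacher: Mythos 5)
Your proposal is correct and follows essentially the same route as the paper: the same character-sum identity, the same evaluation via the Weil-sum lemma, the same decomposition over $\mathbb F_q^*$-cosets leading to the quadratic Gauss sum over $\mathbb F_q$ (your Davenport--Hasse step $G=g^n$ is exactly the paper's $G=(-1)^{n-1}G_1^n$ with $n$ odd), and the same parity count $N_t\equiv 1+\frac{M_t}{q-1}\pmod 2$. Your explicit homogeneity/orbit argument for ``permutes iff $N_t$ is odd for all $t\ne 0$'' and the verification that $(q-1)\mid M_t$ merely spell out what the paper leaves as ``a simple investigation,'' and the stray sign in $\chi_0(-\zeta\Tr(\Phi_t(u)))$ is immaterial since you only use vanishing versus $\pm g$.
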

\begin{proof}
With the discussion above, for $t\in\mathbb F_{q^n}^*$ we have
\[\begin{split}q^nN_t&=\sum_{u\in\mathbb F_{q^n}^*}\sum_{w\in\mathbb F_{q^n}}\chi(L^\prime(u)w^{q+1}+\ell^\prime(tu)w)\\&=G\sum_{u\in\mathbb F_{q^n}^*}\eta(L^\prime(u))\chi\left(-\frac{\lambda\left(L^\prime(u)^s\ell^\prime(tu)^q\right)^{q+1}}{4\N(L^\prime(u))}\right).\end{split}\]
Let $R$ be a set of representatives from each coset of $\mathbb F_{q^n}^*/\mathbb F_q^*$, and $\psi$ be the canonical additive character of $\mathbb F_q$, so that
\[\begin{split}&\mathrel{\phantom{=}}\sum_{u\in\mathbb F_{q^n}^*}\eta(L^\prime(u))\chi\left(-\frac{\lambda\left(L^\prime(u)^s\ell^\prime(tu)^q\right)^{q+1}}{4\N(L^\prime(u))}\right)\\&=\sum_{r\in R}\sum_{u_0\in\mathbb F_q^*}\eta(u_0L^\prime(r))\chi\left(-\frac{u_0^{n+1}\lambda\left(L^\prime(r)^s\ell^\prime(tr)^q\right)^{q+1}}{4u_0^n\N(L^\prime(r))}\right)\\&=\sum_{r\in R}\eta(L^\prime(r))\sum_{u_0\in\mathbb F_q^*}\eta(u_0)\psi\left(-u_0\frac{\Tr\big(\lambda(L^\prime(r)^s\ell^\prime(tr)^q)^{q+1}\big)}{4\N(L^\prime(r))}\right)\\&=\sum_{r\in R}\eta(L^\prime(r))\eta\left(-\frac{\Tr\big(\lambda(L^\prime(r)^s\ell^\prime(tr)^q)^{q+1}\big)}{4\N(L^\prime(r))}\right)\sum_{u_0\in\mathbb F_q^*}\eta(u_0)\psi(u_0)\\&=\sum_{r\in R}\eta\big(-\Tr\big(\lambda(L^\prime(r)^s\ell^\prime(tr)^q)^{q+1}\big)\big)G_1,\end{split}\]
where $G_1$ is the quadratic Gauss sum of $\mathbb F_q$, since the restriction of $\eta$ to $\mathbb F_q$ is exactly the quadratic character of $\mathbb F_q$. Note that $G=(-1)^{n-1}G_1^n$ and $G_1^2=\eta(-1)q$, so
\[q^nN_t=(-1)^{n-1}G_1^{n+1}\sum_{r\in R}\eta\big(-\Tr\big(\lambda(L^\prime(r)^s\ell^\prime(tr)^q)^{q+1}\big)\big),\]
and the parity of $N_t$ is the same as that of
\[\sum_{r\in R}\eta\big(-\Tr\big(\lambda(L^\prime(r)^s\ell^\prime(tr)^q)^{q+1}\big)\big).\]
A simple investigation yields that $N_t$ is odd if and only if the number of $r\in R$ such that $\Tr\big(\lambda(L^\prime(r)^s\ell^\prime(tr)^q)^{q+1}\big)=0$ is even. On the other hand, the polynomial $\frac{L(x^{q+1})}{\ell(x)}$ permutes $\mathbb F_{q^n}$ if and only if $N_t$ is odd for every $t\in\mathbb F_{q^n}^*$. The proof is then completed.
\end{proof}

\begin{corollary}
Let $q$ and $n$ be odd, $s_0$ be an integer such that $\gcd(s_0,q^n-1)=\gcd(1+s,q^n-1)$ with $s=\sum_{i=1}^\frac{n-1}2q^{2i}$, and $h$ be a permutation polynomial of $\mathbb F_{q^n}$. If
\[x^s\ell^\prime(x)^q=\alpha h(x)^{s_0}+\beta h(x)^{s_0q^2}\]
for some $\alpha,\beta\in\mathbb F_{q^n}$ with $\N(\alpha)+\N(\beta)\ne0$, then $\frac{\ell(x)}{x^{q+1}}$ is a permutation polynomial of $\mathbb F_{q^n}$.
\end{corollary}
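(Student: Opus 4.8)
The plan is to pass, via the preceding Proposition, to the parity criterion, and then to exploit that the count occurring there is insensitive to the permutation $h$, so that the whole problem collapses onto a single explicit binomial model.

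First I would record the consequences of $\N(\alpha)+\N(\beta)\ne0$. Writing $P(y)=\alpha y^{s_0}+\beta y^{s_0q^2}$, a nonzero root would give $y^{s_0(q^2-1)}=-\alpha/\beta$; taking norms (using $\N(y)^{q^2-1}=1$ and $n$ odd) forces $\N(\alpha)+\N(\beta)=0$, so $P$ has no nonzero root. Evaluating the hypothesis at $x=0$ gives $P(h(0))=0$, hence $h(0)=0$; and for $x\ne0$ it gives $\ell'(x)\ne0$, so $\ell'$, and therefore $\ell$, is a permutation. Consequently $\frac{\ell(x)}{x^{q+1}}$ permutes $\mathbb F_{q^n}$ iff $\frac{x^{q+1}}{\ell(x)}$ does (inversion on $\mathbb F_{q^n}^*$), and the latter is $\frac{L(x^{q+1})}{\ell(x)}$ with $L=\mathrm{id}$, so $L'=\mathrm{id}$. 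The Proposition then reduces the task to showing $M_t/(q-1)$ is even for every $t\in\mathbb F_{q^n}^*$, where $M_t=\#\{x\in\mathbb F_{q^n}^*:\Tr(\lambda(x^s\ell'(tx)^q)^{q+1})=0\}$.

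The crux is that $M_t$ does not depend on $h$. Substituting $x=t^{-1}u$ removes $t$ from the inner argument, and the hypothesis rewrites the summand as $\bar F_t(h(u)^{s_0})$, where $\bar F_t(z)=1$ exactly when $\Tr(\lambda(t^{-s}(\alpha z+\beta z^{q^2}))^{q+1})=0$. Since $h$ is a bijection with $h(0)=0$ and $y\mapsto y^{s_0}$ is $d$-to-one onto the subgroup $H$ of $d$-th powers, where $d=\gcd(s_0,q^n-1)$, the multiset $\{h(u)^{s_0}:u\ne0\}$ equals $H$ with multiplicity $d$, independently of $h$. Hence
\[M_t=\sum_{u\ne0}\bar F_t(h(u)^{s_0})=d\sum_{z\in H}\bar F_t(z),\]
which depends only on $\alpha$, $\beta$, $d$, and $t$.

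It remains to compare with an explicit model sharing $\alpha,\beta,d$. As $\gcd(1+s,q^n-1)=\gcd(s_0,q^n-1)=d$, the exponent $1+s$ defines the same $H$; take $\ell_0$ realizing the hypothesis with $h_0=\mathrm{id}$ and $s_0$ replaced by $1+s$. Using $(1+s)q^2\equiv s+q\pmod{q^n-1}$ one finds $\ell_0'(u)^q=\alpha u+\beta u^q$, i.e. $\ell_0'(u)=\beta^{q^{n-1}}u+\alpha^{q^{n-1}}u^{q^{n-1}}$. I would verify $\frac{\ell_0(x)}{x^{q+1}}$ permutes directly: by Theorem 1 (with $n$ odd) it suffices that $\frac{\ell_0'(x^{q+1})}{x}=\beta^{q^{n-1}}x^q+\alpha^{q^{n-1}}x^{q^{n-1}}$ permute, and this linearized binomial has no nonzero root, since a root would force $\N(-\beta/\alpha)=1$, i.e. $\N(\alpha)+\N(\beta)=0$ (the cases $\alpha=0$ or $\beta=0$ being immediate). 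Now $\ell$ and $\ell_0$ have the same $\alpha,\beta,d$, so the displayed formula gives $M_t=M_t^{(0)}$ for all $t$, where $M_t^{(0)}$ is the corresponding count for $\ell_0$. The Proposition applied to $\ell_0$ turns its permutation property into the parity of $M_t^{(0)}/(q-1)$; the equality $M_t=M_t^{(0)}$ transports this parity to $\ell$, and the Proposition applied to $\ell$ yields that $\frac{x^{q+1}}{\ell(x)}$—equivalently $\frac{\ell(x)}{x^{q+1}}$—is a permutation.

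The main obstacle is the middle step: recognizing that the Proposition's count is a function of $(\alpha,\beta,d)$ alone, so that it collapses onto the binomial model $\ell_0$. The supporting norm computations—that $\N(\alpha)+\N(\beta)\ne0$ simultaneously gives $h(0)=0$, makes $\ell$ a permutation, and kills the nonzero roots of the model's binomial—together with the exponent identity $(1+s)q^2\equiv s+q$, are routine but must be tracked carefully.
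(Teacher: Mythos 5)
Your proposal is correct and takes essentially the same route as the paper's own proof: both pass through the preceding proposition's parity criterion, observe that since $h(0)=0$ and $\gcd(s_0,q^n-1)=\gcd(1+s,q^n-1)$ the count $M_t$ depends only on the multiset $\{h(x)^{s_0}:x\in\mathbb F_{q^n}^*\}$, and thereby collapse the problem onto the binomial model with transpose $\alpha^{q^{n-1}}x^{q^{n-1}}+\beta^{q^{n-1}}x$ (i.e.\ $\frac{x^{q+1}}{\alpha x^q+\beta^{q^{n-1}}x}$), which permutes precisely because $\N(\alpha)+\N(\beta)\ne0$. Your extra verifications---that $h(0)=0$ and $P$ has no nonzero root, that $\ell$ is itself a permutation so the proposition applies, the exponent identity $(1+s)q^2\equiv s+q\pmod{q^n-1}$, and the check of the model via Theorem~1---are exactly the details the paper asserts implicitly, and they are all sound.
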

\begin{proof}
Note that $h(0)=0$ and for $t\in\mathbb F_{q^n}^*$,
\[(tx)^s\big(\alpha^{q^{n-1}}x^{q^{n-1}}+\beta^{q^{n-1}}x\big)^q=t^s(\alpha x^{1+s}+\beta x^{q+s})=t^s\big(\alpha x^{1+s}+\beta(x^{1+s})^{q^2}\big),\]
while
\[(tx)^s\ell^\prime(x)^q=t^s\big(\alpha h(x)^{s_0}+\beta h(x)^{s_0q^2}\big),\]
so the number of roots of
\[\Tr\big(\lambda\big((tx)^s\big(\alpha^{q^{n-1}}x^{q^{n-1}}+\beta^{q^{n-1}}x\big)^q\big)^{q+1}\big)\]
in $\mathbb F_{q^n}^*$ is equal to that of
\[\Tr\big(\lambda((tx)^s\ell^\prime(x)^q)^{q+1}\big).\]
Therefore, the fact that $\frac{x^{q+1}}{\alpha x^q+\beta^{q^{n-1}}x}$ (the transpose of $\alpha x^q+\beta^{q^{n-1}}x$ is exactly $\alpha^{q^{n-1}}x^{q^{n-1}}+\beta^{q^{n-1}}x$) is a permutation polynomial of $\mathbb F_{q^n}$ implies that so is $\frac{x^{q+1}}{\ell(x)}$, by the preceding proposition.
\end{proof}

\begin{example}\label{e1}
In the case $n=3$, consider $\ell(x)=(\N(a)+1)x^{q^2}+2a^qx^q+2a^{q+1}x$ with $a\in\mathbb F_{q^3}^*$ and $\N(a)^2\ne1$. Let $h(x)=a^{q^2+1}x^{q^2}+a^{q^2}x^q+x$, which is clearly a permutation polynomial of $\mathbb F_{q^3}$. Then
\[\begin{split}&\mathrel{\phantom{=}}a^{2q^2}h(x)^{2q}-h(x)^2\\&=a^{2q^2}\big(a^2x^{2q^2}+x^{2q}+a^{2q+2}x^2+2a^{q+2}x^{q^2+1}+2ax^{q^2+q}+2a^{q+1}x^{q+1}\big)\\&\mathrel{\phantom{=}-}\big(a^{2q^2+2}x^{2q^2}+a^{2q^2}x^{2q}+x^2+2a^{q^2+1}x^{q^2+1}+2a^{2q^2+1}x^{q^2+q}+2a^{q^2}x^{q+1}\big)\\&=(\N(a)^2-1)x^2+2\big(a^{2q^2+q+2}-a^{q^2+1}\big)x^{q^2+1}+2\big(a^{2q^2+q+1}-a^{q^2}\big)x^{q+1}\\&=(\N(a)-1)x\big(2a^{q^2+1}x^{q^2}+2a^{q^2}x^q+(\N(a)+1)x\big)\\&=(\N(a)-1)x\ell^\prime(x)^{q^2}.\end{split}\]
Hence, both
\[\frac{\ell(x)}{x^{q+1}}=(\N(a)+1)x^{q^2-q-1}+2a^qx^{-1}+2a^{q+1}x^{-q}\]
and
\[\frac{\ell^\prime(x^{q+1})}x=(\N(a)+1)x^{q^2+q-1}+2ax^{q^2}+2a^{q+1}x^q\]
are permutation polynomials of $\mathbb F_{q^3}$, and it can be verified that the inverse of $\ell$ is
\[\frac2{1-\N(a)^2}\left(a^{q^2+q}x^{q^2}+\frac{1-\N(a)}2x^q-a^{q^2}x\right),\]
not a binomial.
\end{example}

Assume now that $q=4^k$ for some integer $k$, and $n=3$. We show a permutation polynomial $\frac{L(x^{q+1})}{\ell(x)}$ with neither $L$ nor $\ell$ being a monomial. Let $\alpha$ be a primitive element of $\mathbb F_4$, and
\[L(x)=x^{q^2}+x^q+\alpha^2x\quad\text{and}\quad\ell(x)=\alpha x^{q^2}+x^q+x.\]
Note that $\gcd(q+1,q^3-1)=1$ with $\frac q2(q^2+q-1)(q+1)\equiv1\pmod{q^3-1}$, and
\[L(\ell(x))=x^{q^2}+\alpha x^q+x+x^{q^2}+x^q+\alpha x+\alpha^2\big(\alpha x^{q^2}+x^q+x\big)=x^{q^2}.\]
Then
\[\begin{split}L\big(L(x)^{q^2+q}\big)&=L\big(\big(\alpha^2x^{q^2}+x^q+x\big)\big(x^{q^2}+\alpha^2x^q+x\big)\big)\\&=L\big(\alpha^2x^{2q^2}+\alpha^2x^{2q}+x^2+\alpha x^{q^2+1}+\alpha^2x^{q^2+q}+\alpha x^{q+1}\big)\\&=\alpha^2L(\ell(x^{2q}))+\alpha L\big(\ell\big(x^{q^2+1}\big)\big)\\&=\alpha^2x^2+\alpha x^{q^2+q}.\end{split}\]
We have
\[\alpha^2x^2+\alpha x^{q^2+q}\]
as a composition of permutation polynomials, as well as
\[\alpha x+\alpha^2x^\frac{q^2+q}2.\]
Substituting $x^{q^2+q-1}$ for $x$, we get
\[\alpha x^{q^2+q-1}+\alpha^2 x=\frac{L\big(L(x)^{q^2+q}\big)}x,\]
and thus
\[\frac{L(x^{q+1})}{\ell(x)}=\frac{L\big(L(\ell(x))^{q^2+q}\big)}{\ell(x)}\]
is a permutation polynomial of $\mathbb F_{q^3}$.


\section{Conclusions}

We have obtained some permutation polynomials of $\mathbb F_{q^n}$ with inverses:
\begin{itemize}
\item $\frac{L(x^{q+1})}x$, where $L$ is the inverse of $x^q+ax$ for $a\in\mathbb F_{q^n}^*$ with $\N(-a)\ne1$;
\item $\frac{L(x)}{x^{q+1}}$, where $L$ is the inverse of $x^{q^{n-1}}+ax$ for $a\in\mathbb F_{q^n}^*$ with $\N(-a)\ne1$;
\item $bx^{q^{n-1}}+b^{q^k}x^{q^k+q^{k-1}-1}$ and $b^{q^k}x^{q^{k+1}+q^k-1}+bx^q$, where $n=2k$ and either $k$ is odd and $b$ is an element in $\mathbb F_{q^n}^*$ with $b^\frac{q^n-1}{q+1}\ne-1$, or $k$ is even, $q$ is odd, and $b$ is a square element in $\mathbb F_{q^n}^*$;
\item $a^{q^2}x^{q^3}+b^{q^2}x^{q^3+q^2-1}+ax^{q^2+q-1}+bx^q$, where $n=4$, $q$ is odd, $a$ and $b$ are square elements in $\mathbb F_{q^4}^*$ with $ab^{-q}\in\mathbb F_{q^2}$ and $\N(a)\ne\N(b)$.
\end{itemize}
Also, some other permutation polynomials can be derived from these. For future work, It may be interesting to explore more on permutation polynomials of the form $\frac{L(x^{q+1})}{\ell(x)}$.

\section*{Acknowledgement}

The work of the first author was supported by the China Scholarship Council. The funding corresponds to the scholarship for the PhD thesis of the first author in Paris, France. The French Agence Nationale de la Recherche partially supported the second author's work through ANR BARRACUDA (ANR-21-CE39-0009).


\end{document}